\newtheorem{theorem}{Theorem}
\newtheorem{lemma}{Lemma}
\begin{document}
\baselineskip=17pt

\title{\bf Consecutive square-free values for some polynomials}

\author{\bf S. I. Dimitrov}

\date{2023}

\maketitle

\begin{abstract}

In this paper we study the distribution of consecutive square-free numbers of the forms
$x^2+y^2+z+1$, $x^2+y^2+z+2$ and $x^2+y^2+z^2+z+1$, $x^2+y^2+z^2+z+2$, respectively.
We establish asymptotic formulas for each of these two  cases.\\
\textbf{Keywords}: Square-free numbers $\cdot$ Asymptotic formula $\cdot$ Gauss sum $\cdot$ Sali\'{e} sum.\\
{\bf  2020 Math.\ Subject Classification}:  11L05 $\cdot$ 11N25 $\cdot$  11N37
\end{abstract}

\section{Notations}
\indent

Let $H$ be a sufficiently large positive number.
By $\varepsilon$ we denote an arbitrary small positive number, not the same in all appearances.
As usual $\mu(n)$ is M\"{o}bius' function, $\omega(n)$ denotes the number of distinct prime factors of $n$
and $\tau(n)$ represents  the number of positive divisors of $n$.
Further $[t]$ and $\{t\}$ denote the integer part, respectively, the fractional part of $t$.
Instead of $m\equiv n\,\pmod {d}$ we write for simplicity $m\equiv n\,(d)$.
Moreover $(l,m)$ is the greatest common divisor of $l$ and $m$,
and $(l,m,n)$ is the greatest common divisor of $l$, $m$ and $n$.
The letter $p$  will always denote prime number.
We put $\|t\|=\min(\{t\}_,1-\{t\})$. As usual $e(t)$=exp($2\pi it$).
For any odd $q$ we denote by $\left(\frac{\cdot}{q}\right)$  the Jacobi symbol.
For any $n$ and $q$ such that $(n, q)=1$ we denote by $\overline{n}_q$
the inverse of $n$ modulo $q$.
Throughout this paper we shall denote the Gauss sums
\begin{equation}\label{Gausssums}
G(q,m,n)=\sum\limits_{x=1}^{q}e\left(\frac{mx^2+nx}{q}\right)\,,\quad G(q,m)=G(q,m,0)\,,
\end{equation}
the Sali\'{e} sum
\begin{equation}\label{Saliesum}
S(q,m,n)=\sum\limits_{x=1\atop{(x, q)=1}}^{q}\left(\frac{x}{q}\right)e\left(\frac{mx+n\bar{x}_q}{q}\right)
\end{equation}
and the Kloosterman sum
\begin{equation}\label{Kloosterman}
K(q,m,n)=\sum\limits_{x=1\atop{(x, q)=1}}^{q}e\left(\frac{mx+n\bar{x}_q}{q}\right)\,.
\end{equation}
Define
\begin{align}
\label{GammaH}
&\Gamma(H)=\sum\limits_{1\leq x, y,z\leq H}\mu^2(x^2+y^2+z^2+z+1)\mu^2(x^2+y^2+z^2+z+2)\,,\\
\label{GammaHast}
&\Gamma^\ast(H)=\sum\limits_{1\leq x, y,z\leq H}\mu^2(x^2+y^2+z+1)\mu^2(x^2+y^2+z+2)\,,\\
\label{lambdaq1q2lmn}
&\lambda(q_1, q_2, l, m, n)=\sum\limits_{1\leq x, y,z\leq q_1q_2\atop{x^2+y^2+z^2+z+1\equiv 0\,(q_1)
\atop{x^2+y^2+z^2+z+2\equiv 0\,(q_2)}}}e\left(\frac{lx+my+nz}{q_1q_2}\right)\,,\\
\label{lambdaq1q2lmnast}
&\lambda^\ast(q_1, q_2, l, m, n)=\sum\limits_{1\leq x, y,z\leq q_1q_2\atop{x^2+y^2+z+1\equiv 0\,(q_1)
\atop{x^2+y^2+z+2\equiv 0\,(q_2)}}}e\left(\frac{lx+my+nz}{q_1q_2}\right)\,.
\end{align}
We define also
\begin{equation}\label{lambdaq1q2}
\lambda(q_1, q_2 )=\lambda(q_1, q_2 , 0, 0, 0)
\end{equation}
and
\begin{equation}\label{lambdaq1q2ast}
\lambda^\ast(q_1, q_2 )=\lambda^\ast(q_1, q_2, 0, 0, 0)\,.
\end{equation}

\section{Introduction and statement of the results}
\indent

The problem for the consecutive square-free numbers arises in 1932 when Carlitz \cite{Carlitz} proved the asymptotic formula
\begin{equation}\label{Carlitz}
\sum\limits_{1\leq n\leq H}\mu^2(n)\mu^2(n+1)=\prod\limits_{p}\left(1-\frac{2}{p^2}\right)H
+\mathcal{O}\big(H^{\theta+\varepsilon}\big)\,,
\end{equation}
where $\theta=2/3$.
Afterwards formula \eqref{Carlitz} was improved by Heath-Brown \cite{Heath-Brown} to $\theta=7/11$
and by Reuss \cite{Reuss} to $\theta=(26+\sqrt{433})/81$.
An interesting problem for number theory is the proof of formula \eqref{Carlitz} with numbers $n$ of a special form.
In this regard in 2020 the author \cite{Dimitrov1} using the method of Tolev \cite{Tolev} showed that there exist
infinitely many consecutive square-free numbers of the form $x^2+y^2+1$, $x^2+y^2+2$.
More precisely we proved that the asymptotic formula
\begin{equation}\label{asymptoticformula0}
\sum\limits_{1\leq x, y\leq H}\mu^2(x^2+y^2+1)\mu^2(x^2+y^2+2)=
\prod\limits_{p}\left(1-\frac{\widetilde{\lambda}(p^2, 1)+\widetilde{\lambda}(1, p^2)}{p^4}\right)
H^2+\mathcal{O}\left(H^{\frac{8}{5}+\varepsilon}\right)
\end{equation}
holds. Here
\begin{equation*}
\widetilde{\lambda}(q_1, q_2)=\sum\limits_{1\leq x, y\leq q_1 q_2\atop{x^2+y^2+1\equiv 0\,(q_1)\atop{x^2+y^2+2\equiv 0\,(q_2)}}}1\,.
\end{equation*}
Subsequently the author \cite{Dimitrov2} proved that there exist infinitely many square-free pairs of the type $x^2+1$, $x^2+2$.
Further, Jing and Liu \cite{Jing} improved the  reminder term in \eqref{asymptoticformula0}
to $\mathcal{O}\left(H^{\frac{3}{2}+\varepsilon}\right)$.
Continuing these research  Zhou and Ding \cite{Ding} established that
\begin{equation*}
\sum\limits_{1\leq x, y,z\leq H}\mu^2(x^2+y^2+z^2+k)=\prod\limits_{p}\left(1-\frac{\lambda_3(p^2)}{p^6}\right) H^3+\mathcal{O}\left(H^{\frac{7}{3}+\varepsilon}\right)\,,
\end{equation*}
where
\begin{equation*}
\lambda_3(q)=\sum\limits_{1\leq x, y,z\leq q\atop{x^2+y^2+z^2+k\equiv 0\,(q)}}1\,.
\end{equation*}
Afterwards B. Chen \cite{Bo} generalized the results of the author by deriving the asymptotic formula
\begin{equation*}
\sum\limits_{1\leq x_1,\ldots, x_k\leq H}\mu^2(x_1^2+\cdots+x_k^2+1)\,\mu^2(x_1^2+\cdots+x_k^2+2)
=\sigma_k H^k+\mathcal{O}\left(H^{k-\frac{1}{2}-\frac{1}{2k}+\varepsilon}\right)\,,
\end{equation*}
where $k\geq3$,
\begin{equation*}
\sigma_k=\prod\limits_{p}\left(1-\frac{\lambda_k(p^2, 1)+\lambda_k(1, p^2)}{p^{2k}}\right)
\end{equation*}
and
\begin{equation*}
\lambda_k(q_1, q_2)=
\sum\limits_{1\leq  x_1,\ldots, x_k\leq q_1 q_2\atop{x_1^2+\cdots+x_k^2+1\equiv 0\,(q_1)\atop{x_1^2+\cdots+x_k^2+2\equiv 0\,(q_2)}}}1\,.
\end{equation*}
Recently Chen and Wang \cite{Chen} generalized the result of Zhou and Ding
by deriving an asymptotic formula for the distribution of $r$-free numbers of the form $x^2+y^2+z^2+k$.
In the case $r=2$, they obtained reminder term $\mathcal{O}\left(H^{\frac{9}{4}+\varepsilon}\right)$
which improves the reminder term $\mathcal{O}\left(H^{\frac{7}{3}+\varepsilon}\right)$ given by Zhou and Ding  \cite{Ding}.
Very recently Fan and Zhai \cite{Fan} established an asymptotic formula for the number of triples of positive integers
$x, y, z \leq H$ such that $x^2+y^2+z^2+1$, $x^2+y^2+z^2+2$ are $k$-free with $k\geq 2$.
Especially, in the case $k=2$, they obtained the error term $\mathcal{O}\left(H^{\frac{9}{4}+\varepsilon}\right)$
which improves the error term $\mathcal{O}\left(H^{\frac{7}{3}+\varepsilon}\right)$ given by B. Chen \cite{Bo}.
Motivated by these results we extend these studies by proving two theorems.
\begin{theorem}\label{Theorem1} For the sum $\Gamma(H)$ defined by \eqref{GammaH} we have the asymptotic formula
\begin{equation}\label{asymptoticformula1}
\Gamma(H)=\prod\limits_{p}\left(1-\frac{\lambda(p^2, 1)+\lambda(1, p^2)}{p^6}\right)H^3
+\mathcal{O}\left(H^{\frac{9}{4}+\varepsilon}\right)\,.
\end{equation}
\end{theorem}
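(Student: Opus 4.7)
My proof plan follows the framework developed by Tolev \cite{Tolev} and extended in the references cited above. The starting point is the Möbius identity $\mu^2(n) = \sum_{d^2 \mid n} \mu(d)$ applied to both factors, rewriting
\[
\Gamma(H) = \sum_{d_1, d_2} \mu(d_1)\mu(d_2)\, N(d_1, d_2, H),
\]
where $N(d_1, d_2, H)$ counts triples $(x,y,z) \in [1,H]^3$ with $d_1^2 \mid x^2+y^2+z^2+z+1$ and $d_2^2 \mid x^2+y^2+z^2+z+2$. Since consecutive integers are coprime, only pairs with $(d_1,d_2) = 1$ contribute. I would split the double sum over $(d_1, d_2)$ according to parameters $D_1 \leq D_2$ into regions (a) $\max(d_1,d_2) \leq D_1$, (b) $D_1 < \max(d_1,d_2) \leq D_2$, and (c) $\max(d_1,d_2) > D_2$; I expect the optimal choice to lie near $D_1 \sim H^{1/2}$ and $D_2 \sim H^{3/4}$, which is what is needed to reach the exponent $9/4$.

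On region (a) the main term is extracted. Writing each congruence condition via the Fourier expansion $\mathbf{1}_{q \mid m} = q^{-1}\sum_{a \pmod q} e(am/q)$ and separating the zero-frequency contribution, one obtains
\[
N(d_1, d_2, H) = \frac{\lambda(d_1^2, d_2^2)}{(d_1 d_2)^6}\, H^3 + E(d_1, d_2, H),
\]
where $E$ is a sum of incomplete exponential sums $\sum_{x \leq H} e(lx/q)$ (with $q = d_1^2 d_2^2$) weighted by $\lambda(q_1, q_2, l, m, n)$ from \eqref{lambdaq1q2lmn}. Summing the main term over coprime $(d_1, d_2)$, completing the tail to an Euler product, and using multiplicativity of $\lambda(\cdot,\cdot)$ in its arguments yield the singular series $\prod_p \bigl(1 - (\lambda(p^2,1) + \lambda(1,p^2))/p^6\bigr)$ appearing in \eqref{asymptoticformula1}; the tail completion contributes only an admissible error.

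On regions (b) and (c) the contribution is bounded rather than evaluated, and the central task is to estimate $\lambda(q_1, q_2, l, m, n)$ uniformly. I would apply CRT to reduce to prime-power moduli, and for each odd prime $p$ complete the square in $z$ by multiplying by $4$, replacing $z^2+z+c$ by $(2z+1)^2 + (1-4c)$. Parametrizing $z$ through a square root introduces a Jacobi-symbol weight, so that after also completing the squares in $x$ and $y$ the residual sums are naturally identified with Salié sums $S(q,m,n)$ from \eqref{Saliesum}, together with ordinary Gauss sums $G(q,m)$ from \eqref{Gausssums}, and in degenerate cases Kloosterman sums $K(q,m,n)$ from \eqref{Kloosterman}. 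The square-root cancellation bounds $|G(q,m)| \ll q^{1/2}$, $|S(q,m,n)| \ll q^{1/2+\varepsilon}(m,n,q)^{1/2}$, and the Weil bound for $K$ then give acceptable control on $\lambda(q_1, q_2, l, m, n)$. Inserting these into (b) and (c), combined with a trivial bound when $d_1 d_2$ becomes comparable to $H$, should yield the overall error $O(H^{9/4+\varepsilon})$.

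The main obstacle I foresee is the uniform, case-by-case analysis of $\lambda(q_1, q_2, l, m, n)$: one must separate the behaviour at $p=2$ from odd primes, distinguish primes for which $1-4c$ vanishes or is a nonzero square, treat the interaction between the two coupled congruences after CRT, and track how the additive twist $e((lx+my+nz)/q_1q_2)$ interacts with completing the square so as to preserve the square-root savings for every nontrivial frequency. The principal new feature compared with the case of $x^2+y^2+z^2+k$ treated by Zhou--Ding \cite{Ding} and Fan--Zhai \cite{Fan} is the linear term $+z$ in the polynomial, which is precisely what forces the systematic appearance of Salié sums in place of pure Gauss sums.
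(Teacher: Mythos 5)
Your plan for the small moduli agrees with the paper's treatment of $\Gamma_1(H)$: the M\"obius decomposition, completion of the two congruences by additive characters, the bound $\lambda(q_1,q_2,l,m,n)\ll (q_1q_2)^{1+\varepsilon}(q_1q_2,l,m,n)$ obtained through Gauss and Sali\'e sums (Lemmas \ref{lambdaupperbound} and \ref{Lambda123est}), the resulting per-pair error $\mathcal{O}\big(H^2(d_1d_2)^{\varepsilon-2}+H(d_1d_2)^{\varepsilon}+(d_1d_2)^{2+\varepsilon}\big)$, and the completion of the singular series to the Euler product; with the cut at $d_1d_2\le \xi=H^{3/4}$ this gives exactly the two balanced errors $H^3\xi^{\varepsilon-1}$ and $\xi^{3+\varepsilon}$, i.e. $H^{9/4+\varepsilon}$. (A minor point: in the paper Kloosterman sums occur only for the polynomial $x^2+y^2+z+c$ of Theorem \ref{Theorem2}; for Theorem \ref{Theorem1} the three quadratic completions leave an odd power of the Jacobi symbol, so Sali\'e sums suffice throughout Lemma \ref{lambdaupperbound}.)

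The genuine gap is in your regions (b) and (c). You propose to bound the large-moduli contribution by the same completed exponential sums, using Weil/Sali\'e square-root cancellation for $\lambda(q_1,q_2,l,m,n)$, together with a trivial bound when $d_1d_2$ is comparable to $H$. This cannot reach $H^{9/4+\varepsilon}$: the completion method carries, for every pair $(d_1,d_2)$, an error of size about $(d_1d_2)^{2+\varepsilon}$ (the term $(q_1q_2)^{1+\varepsilon}$ with $q_i=d_i^2$, coming from the triple sum over nonzero frequencies), while the admissible moduli run up to $d_1,d_2\asymp H$, so $d_1d_2$ can be as large as about $H^2$; summing $(d_1d_2)^{2+\varepsilon}$ over $d_1d_2\sim D$ costs about $D^{3+\varepsilon}$, which already exceeds $H^{9/4}$ for $D\gg H^{3/4}$ and reaches $H^{3+\varepsilon}$ and beyond in the upper ranges. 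A trivial bound fares no better, because bounding the number of admissible $z$ for fixed $(x,y)$ by $(H(d_1d_2)^{-2}+1)(d_1d_2)^{\varepsilon}$ leaves a ``$+1$'' per pair $(d_1,d_2)$, and the number of pairs with large product is enormous; the point is that one must count only those pairs for which solutions actually exist. The paper handles $d_1d_2>\xi$ by an entirely elementary device, with no exponential sums (Subsection \ref{SubsectionGammaH2est}): writing $x^2+y^2+z^2+z+1=kd_1^2$ and $kd_1^2+1=ld_2^2$, splitting dyadically $d_1\sim D_1$, $d_2\sim D_2$, and counting the pairs $(d_2,l)$ via the divisor bound $\tau(kd_1^2+1)\ll H^{\varepsilon}$, which exploits both congruences simultaneously and yields $\Gamma_2(H)\ll H^{2+\varepsilon}\max(D_1,D_2)^{-1}\ll H^{2+\varepsilon}\xi^{-1/2}$, negligible against $H^{9/4}$. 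Without this idea (going back to Tolev and used by Fan--Zhai) your large-moduli argument does not close, so you should replace the exponential-sum treatment of (b) and (c) by this divisor-function count.
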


\begin{theorem}\label{Theorem2} For the sum $\Gamma^\ast(H)$ defined by \eqref{GammaHast} we have the asymptotic formula
\begin{equation}\label{asymptoticformula2}
\Gamma^\ast(H)=\prod\limits_{p}\left(1-\frac{\lambda^\ast(p^2, 1)+\lambda^\ast(1, p^2)}{p^6}\right)H^3
+\mathcal{O}\left(H^{\frac{5}{2}+\varepsilon}\right)\,.
\end{equation}
\end{theorem}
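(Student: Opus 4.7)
Starting from $\mu^2(n) = \sum_{d^2 \mid n} \mu(d)$ applied to both factors in $\Gamma^{\ast}(H)$ and using $\gcd(n+1, n+2) = 1$ to force $(d_1, d_2) = 1$, one writes
\[
\Gamma^{\ast}(H) = \sum_{(d_1, d_2) = 1} \mu(d_1)\mu(d_2)\, \mathcal{N}^{\ast}(d_1^2, d_2^2;\, H),
\]
where $\mathcal{N}^{\ast}(q_1, q_2; H) = \#\{(x,y,z)\in [1,H]^3 : q_1 \mid x^2+y^2+z+1,\ q_2 \mid x^2+y^2+z+2\}$. I would split this outer sum at a level $d := d_1 d_2 \le \xi$ versus $d > \xi$, optimizing $\xi$ at the end.

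\textbf{Step 2 (structural formula for $\lambda^{\ast}$).} The decisive feature, distinguishing Theorem \ref{Theorem2} from Theorem \ref{Theorem1}, is that $z$ appears linearly. For each $(x,y) \bmod Q$ (with $Q = q_1 q_2$) the system determines $z$ uniquely modulo $Q$, namely $z \equiv -x^2 - y^2 - 1 - q_1 \overline{q_1}_{q_2} \pmod Q$. Substituting in \eqref{lambdaq1q2lmnast} decouples the sum and yields
\[
\lambda^{\ast}(q_1, q_2, l, m, n) = e\!\left(\frac{-n(1 + q_1 \overline{q_1}_{q_2})}{Q}\right) G(Q, -n, l)\, G(Q, -n, m),
\]
in the notation \eqref{Gausssums}. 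In particular $\lambda^{\ast}(q_1, q_2) = Q^2$. Standard bounds for quadratic Gauss sums (vanishing unless $(n, Q) \mid l$ and $(n, Q) \mid m$, of magnitude $\ll \sqrt{Q (n, Q)}$ otherwise) give $|\lambda^{\ast}(q_1, q_2, l, m, n)| \ll Q (n, Q)$ under the corresponding divisibility restrictions.

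\textbf{Step 3 (small $d$; main term).} For $d_1 d_2 \le \xi$ I would apply discrete Fourier inversion on $(\mathbb{Z}/Q\mathbb{Z})^3$, obtaining
\[
\mathcal{N}^{\ast}(q_1, q_2; H) = \frac{H^3}{Q} + \frac{1}{Q^3}\sum_{(l,m,n)\neq(0,0,0)} \lambda^{\ast}(q_1, q_2, l, m, n)\, T_l(H)\, T_m(H)\, T_n(H),
\]
where $T_k(H) = \sum_{x=1}^H e(-kx/Q)$ satisfies $|T_k(H)| \ll \min(H, Q/\|k/Q\|)$. Combining the Gauss-sum bound of Step 2 with the standard completion estimate $\sum_{0 < k < Q} \min(H, Q/k) \ll Q \log Q$ controls the error on $\mathcal{N}^{\ast}$. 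Summing the main term $H^3/Q$ against $\mu(d_1)\mu(d_2)$ over coprime pairs, a multiplicativity check (using $\lambda^{\ast}(q_1, q_2) = Q^2$) identifies the completed series with the Euler product in \eqref{asymptoticformula2}, introducing a tail error of order $H^3/\xi$.

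\textbf{Step 4 (large $d$; main obstacle).} For $d > \xi$ I would pass to the fiber structure: the condition $d_1^2 \mid x^2+y^2+z+1$ forces $x^2+y^2+z+1 = k d_1^2$ for some integer $k \ge 1$, so that $z = k d_1^2 - x^2 - y^2 - 1 \in [1, H]$ is equivalent to $x^2 + y^2 \in [k d_1^2 - H - 1,\ k d_1^2 - 2]$, an annular strip of area $\mathcal{O}(H)$. The Gauss circle estimate provides $\mathcal{O}(H^{1+\varepsilon})$ lattice points $(x, y) \in [1, H]^2$ in each such annulus; summing over $k \ll H^2/d_1^2$ and $d_1 > \xi_1$ (with a $\tau$-bound absorbing the $d_2$ factor) contributes $\mathcal{O}(H^{3+\varepsilon}/\xi_1)$. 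Choosing $\xi_1 \asymp H^{1/2}$ (and symmetrically $\xi_2$) balances all error contributions to $\mathcal{O}(H^{5/2+\varepsilon})$. The principal obstacle is precisely this large-$d$ estimate: because $z$ enters linearly, no genuine exponential-sum cancellation is available in the $z$-aspect (no Sali\'{e} sum arises here, in contrast to Theorem \ref{Theorem1}, where the quadratic term $z^2 + z$ produces one), and the Gauss-circle saving of $H^{1/2}$ is the best available. This accounts for the weaker exponent $5/2$ here compared to the $9/4$ of Theorem \ref{Theorem1}.
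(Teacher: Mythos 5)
Your Steps 1--3 are structurally sound, and the explicit evaluation $\lambda^{\ast}(q_1,q_2,l,m,n)=e(\cdot)\,G(Q,-n,l)\,G(Q,-n,m)$, hence $\lambda^{\ast}(q_1,q_2)=Q^2$, is a genuine simplification of the paper's treatment of $\lambda^{\ast}$ (Lemma \ref{lambdaastupperbound}, which goes through Kloosterman sums and Weil's bound). The gap is in the final balancing of Step 4, and it is quantitative, not cosmetic: you never compute the completion error of Step 3. Doing so with exactly the tools you allow yourself (absolute values of quadratic Gauss sums, so $|\lambda^{\ast}(q_1,q_2,l,m,n)|\ll Q(n,Q)$ subject to $(n,Q)\mid(l,m)$, together with $\sum_{0<t<Q}\min(H,\|t/Q\|^{-1})\ll Q\log Q$) yields, for $Q=(d_1d_2)^2$,
\begin{equation*}
\mathcal{N}^{\ast}(d_1^2,d_2^2;H)=\frac{H^3}{Q}+\mathcal{O}\big(H^2Q^{\varepsilon-1}+HQ^{\varepsilon}+Q^{1+\varepsilon}\big)\,,
\end{equation*}
the last term coming from the frequencies with $l,m,n$ all nonzero. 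Summed over the Step-3 range this contributes about $\xi^{3+\varepsilon}$ where $\xi$ is the product cutoff. Your choice $\xi_1\asymp\xi_2\asymp H^{1/2}$ forces the Step-3 region to contain all pairs with $d_1,d_2\le H^{1/2}$ (otherwise $\{d_1>\xi_1\}\cup\{d_2>\xi_2\}$ does not cover $\{d_1d_2>\xi\}$), i.e. products $d_1d_2$ up to $H$, and then this completion error is of order $H^{3+\varepsilon}$, the size of the main term. Re-optimizing inside your framework, i.e. balancing your (correctly justified) large-modulus bound $H^{3+\varepsilon}\xi^{-1/2}$ against $\xi^{3+\varepsilon}$, gives $\xi=H^{6/7}$ and only $\mathcal{O}\big(H^{18/7+\varepsilon}\big)$, short of the claimed $\mathcal{O}\big(H^{5/2+\varepsilon}\big)$. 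So the assertion that your parameter choice balances all errors at $H^{5/2+\varepsilon}$ is not supported by the estimates you set up.

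For comparison, the paper splits at $\xi^{\ast}=H^{1/2}$, accepts the cruder per-modulus error $\mathcal{O}\big(H^2Q^{\varepsilon}+HQ^{1+\varepsilon}+Q^{2+\varepsilon}\big)$ so that the small-modulus total is $(\xi^{\ast})^{5+\varepsilon}=H^{5/2+\varepsilon}$ and the singular-series tail is $H^3(\xi^{\ast})^{-1+\varepsilon}=H^{5/2+\varepsilon}$, and then bounds the large-modulus part by $H^{2+\varepsilon}(\xi^{\ast})^{-1/2}$ by fixing $k=(x^2+y^2+z+1)/d_1^2$ and counting only the divisor-many admissible $d_2$; in effect that step takes the number of $(x,y,z)\in[1,H]^3$ on the fiber $x^2+y^2+z+1=kd_1^2$ to be $\mathcal{O}(H^{\varepsilon})$, a full factor $H$ stronger than your annulus count $\mathcal{O}(H^{1+\varepsilon})$, which is the bound that is straightforward to justify. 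This discrepancy in the large-modulus range is exactly where your argument and the paper's diverge: with your fiber count you would need a correspondingly sharper small-modulus estimate (for instance, exploiting cancellation over $n$ in the triple-frequency terms instead of taking absolute values) to reach the exponent $5/2$.
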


\section{Lemmas}
\indent

\begin{lemma}\label{Gausslemma}
For the Gauss sum we have

\bigskip

\emph{(i)} \;\;\quad If $(q_1,q_2)=1$ then
\begin{equation*}
G(q_1 q_2, m, n)=G(q_1, mq_2, n)\,G(q_2, mq_1, n)\,.
\end{equation*}

\emph{(ii)} \quad If $(q_1,q_2)=1$ then
\begin{equation*}
G(q_1 q_2, m_1 q_2+m_2 q_1, n)=G(q_1, m_1 q_2^2, n)\,G(q_2, m_2 q_1^2, n)\,.
\end{equation*}
\quad \emph{(iii)} \quad If $(q,m)=d$ then
\begin{equation*}
G(q, m ,n)=\begin{cases}d\,G\left(q/d, m/d, n/d\right) \;\text{ if }\; d\mid{n}\,,\\
0 \quad\quad\quad\quad\quad\quad\quad\quad \mbox{ if } \; d\nmid{n}\,.
\end{cases}
\end{equation*}
\;\;\;\emph{(iv)} \quad If $(q ,2m)=1$ then
\begin{equation*}
G(q, m, n)=e\left(\frac{-\overline{(4m)}_q\,n^2}{q}\right)\left(\frac{m}{q}\right)G(q,1)\,.
\end{equation*}
\;\, \emph{(v)} \quad If $(q, 2)=1$ then
\begin{equation*}
G^2(q, 1)=(-1)^{\frac{q-1}{2}} q\,.
\end{equation*}
\end{lemma}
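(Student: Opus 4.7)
The plan is to treat the five parts as a package of classical Gauss-sum manipulations; each is short, and I would handle them in the order listed so that the later parts can reuse the earlier ones.

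For (i) I would exploit the Chinese Remainder Theorem by parametrising the residues mod $q_1q_2$ as $x=x_1q_2+x_2q_1$ with $x_1$ running mod $q_1$ and $x_2$ mod $q_2$. Expanding $mx^2+nx$ and dividing by $q_1q_2$, the mixed term $2mx_1x_2q_1q_2/(q_1q_2)=2mx_1x_2$ is an integer and hence drops out of $e(\cdot)$; the remainder splits as $(mq_2x_1^2+nx_1)/q_1+(mq_1x_2^2+nx_2)/q_2$, which gives the factorisation. Part (ii) is then immediate from (i) after substituting $m=m_1q_2+m_2q_1$ and reducing each factor modulo the corresponding $q_i$.

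For (iii), writing $m=dm'$, $q=dq'$ with $(m',q')=1$ and $x=aq'+b$ with $0\le a<d$, $0\le b<q'$, the quadratic part $dm'x^2/q$ reduces mod $1$ to $m'b^2/q'$, while the linear part contributes $na/d+nb/(dq')$. The $a$-sum is $\sum_{a<d}e(na/d)$, which equals $d$ if $d\mid n$ and vanishes otherwise; when $d\mid n$ the residual $b$-sum is exactly $G(q/d,m/d,n/d)$. For (iv) I would complete the square via the identity $4m(mx^2+nx)=(2mx+n)^2-n^2$, which is valid modulo $q$ since $(2m,q)=1$; extracting the factor $e(-\overline{(4m)}_qn^2/q)$ and substituting $y\equiv 2mx+n\pmod q$ (a bijection on $\mathbb{Z}/q\mathbb{Z}$) reduces matters to $G(q,\overline{(4m)}_q)$. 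The last step is the standard identity $G(q,k)=\bigl(\tfrac{k}{q}\bigr)G(q,1)$ for odd $q$ and $(k,q)=1$, obtained by writing $G(q,1)=\sum_y(1+(\tfrac{y}{q}))e(y/q)=\sum_y(\tfrac{y}{q})e(y/q)$ and then changing variables $y\mapsto k^{-1}y$; together with $\bigl(\tfrac{\overline{(4m)}_q}{q}\bigr)=\bigl(\tfrac{m}{q}\bigr)$ this yields the claim.

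Part (v) is the classical quadratic Gauss sum evaluation of Gauss himself: for odd $q$ one has $G(q,1)=\sqrt{q}$ if $q\equiv 1\pmod 4$ and $G(q,1)=i\sqrt{q}$ if $q\equiv 3\pmod 4$, so $G^2(q,1)=(-1)^{(q-1)/2}q$. I would simply quote this rather than reprove it. The only genuinely delicate step in the whole lemma is (iv), where one must be careful that $\overline{(4m)}_q$ is well-defined (which is exactly the hypothesis $(q,2m)=1$) and that the change of variables $y\equiv 2mx+n\pmod q$ is indeed a bijection; once those points are verified, everything else is bookkeeping and none of it should be an obstacle.
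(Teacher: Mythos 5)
The paper itself gives no argument for this lemma (it simply cites Estermann and Hua), so your attempt is judged on its own merits. Parts (i)--(iii) are fine: the CRT splitting for (i), the reduction of the quadratic coefficient modulo $q_i$ for (ii), and the decomposition $x=aq'+b$ for (iii) are all correct, and quoting Gauss's evaluation for (v) is consistent with the level of detail the paper itself adopts. In (iv), the completion of the square $4m(mx^2+nx)=(2mx+n)^2-n^2$, the bijectivity of $y\equiv 2mx+n\ (q)$, and the identity $\left(\frac{\overline{(4m)}_q}{q}\right)=\left(\frac{m}{q}\right)$ are also correct.

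The gap is in your justification of the final step of (iv), namely $G(q,k)=\left(\frac{k}{q}\right)G(q,1)$ for \emph{all} odd $q$ with $(k,q)=1$. Your argument rests on $G(q,1)=\sum_y\bigl(1+\left(\frac{y}{q}\right)\bigr)e(y/q)=\sum_y\left(\frac{y}{q}\right)e(y/q)$, i.e.\ on the count $\#\{x:x^2\equiv y\ (q)\}=1+\left(\frac{y}{q}\right)$, which is valid only for prime $q$. Already for $q=9$ the identity fails: $G(9,1)=3$, while $\sum_y\left(\frac{y}{9}\right)e(y/9)=\sum_{(y,3)=1}e(y/9)=\mu(9)=0$; more generally the Jacobi symbol modulo a non-squarefree $q$ is an imprimitive character and its Gauss sum vanishes, so the change-of-variables computation collapses to $0=0$ and proves nothing. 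This matters here because the lemma is applied in the paper with moduli that are divisors of $d_1^2d_2^2$ ($d_i$ squarefree), hence with $q$ of the form $p^2$ and worse, so the prime case is not enough. The statement itself is true for every odd $q$; to repair the proof, use part (i) to reduce to prime powers, run your argument for $q=p$, and for $q=p^a$ with $a\ge 2$ write $x=u+p^{a-1}v$ to get the recursion $G(p^a,k)=p\,G(p^{a-2},k)$, which together with $\left(\frac{k}{p^a}\right)=\left(\frac{k}{p^{a-2}}\right)$ completes the induction (or simply cite Hua's book, as the paper does).
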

\begin{proof}
See \cite{Estermann} and \cite{Hua}.
\end{proof}

\begin{lemma}\label{Salieestimate}
Let $q$ is odd integer. For the sum defined by \eqref{Saliesum} we have
\begin{equation*}
|S(q,m,n)|\leq 2^{\omega(q)}\sqrt{q}\,.
\end{equation*}
\end{lemma}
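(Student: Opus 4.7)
The plan is to reduce the estimate to a prime modulus by proving multiplicativity of $S(q,m,n)$ under CRT, and then to evaluate $S(p,m,n)$ by relating it to a Gauss sum of the type appearing in Lemma \ref{Gausslemma}.

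First I would verify multiplicativity. Assume $(q_1,q_2)=1$ and parametrise $x\in(\mathbb{Z}/q_1q_2\mathbb{Z})^*$ by $x\equiv x_1 q_2\overline{q_2}_{q_1}+x_2 q_1\overline{q_1}_{q_2}\pmod{q_1q_2}$, where $x_i\in(\mathbb{Z}/q_i\mathbb{Z})^*$. Since the Jacobi symbol is completely multiplicative in the lower entry, $\left(\frac{x}{q_1q_2}\right)=\left(\frac{x_1 q_2}{q_1}\right)\left(\frac{x_2 q_1}{q_2}\right)$; and the inverse $\overline{x}_{q_1q_2}$ reduces mod $q_i$ to $\overline{x_i q_{3-i}}_{q_i}$. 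Plugging these into \eqref{Saliesum} gives a clean factorisation
$$S(q_1q_2,m,n)=\left(\tfrac{q_2}{q_1}\right)\!\left(\tfrac{q_1}{q_2}\right)\,S\!\left(q_1,\,mq_2,\,n\overline{q_2}_{q_1}\right)S\!\left(q_2,\,mq_1,\,n\overline{q_1}_{q_2}\right).$$
Iterating along the prime-power factorisation $q=\prod p_i^{k_i}$ reduces the problem to showing $|S(p^k,m',n')|\le 2\sqrt{p^k}$ for each factor; taking the product then yields $2^{\omega(q)}\sqrt q$.

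Next I would handle the prime-power case. When $k=1$ and $(p,2mn)=1$, the substitution $x\mapsto\overline{m}_p x$ reduces $S(p,m,n)$ to $\left(\frac{m}{p}\right)S(p,1,mn)$. I would then group the summation by the value $y\equiv x+mn\overline{x}_p\pmod p$: the quadratic $x^2-yx+mn\equiv 0\pmod p$ has $1+\left(\frac{y^2-4mn}{p}\right)$ solutions and pairs $x$ with $mn\overline{x}_p$, so after collecting terms the character sum collapses to a sum over the at most two roots $w$ of $w^2\equiv mn\pmod p$, multiplied by a Gauss factor of modulus $\sqrt p$ identified via Lemma \ref{Gausslemma}(iv)--(v). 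This gives $|S(p,m,n)|\le 2\sqrt p$. The degenerate cases $p\mid m$ or $p\mid n$ produce either zero or a Ramanujan-type sum and are harmless. For $k\ge 2$ I would apply the standard Hensel lift: write $x=x_0+p^{\lceil k/2\rceil}t$, evaluate the inner sum in $t$ by orthogonality, and observe that the surviving outer sum is supported on solutions of a quadratic congruence modulo $p^{\lfloor k/2\rfloor}$, of which there are at most two, producing $|S(p^k,m,n)|\le 2\sqrt{p^k}$.

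The main obstacle is bookkeeping: in the multiplicativity step one must track the Jacobi and inverse twists under CRT without picking up spurious factors beyond unit modulus, and in the prime case one must verify that the pairing $x\leftrightarrow mn\overline{x}_p$ combines the two Jacobi contributions into a single Gauss sum cleanly. Once these manipulations are made precise, the bound follows by straightforward combination of the three steps.
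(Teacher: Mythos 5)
The paper does not actually prove this lemma: its ``proof'' is the citation to Louvel \cite{Louvel}, where the bound is established under suitable coprimality hypotheses. Your plan --- twisted multiplicativity under CRT, Sali\'e's evaluation at a prime, and a Hensel/stationary-phase argument at higher prime powers --- is indeed the classical skeleton for such a bound, but two steps in your sketch are genuine gaps, not bookkeeping. In the prime case, the phrase ``after collecting terms the character sum collapses'' is precisely the nontrivial content of Sali\'e's evaluation, and it does not follow from the pairing $x\leftrightarrow mn\overline{x}_p$ alone. If $\left(\frac{mn}{p}\right)=-1$ the paired terms cancel and the sum is $0$; but if $mn\equiv w^2\,(p)$ the two roots of $x^2-yx+mn\equiv 0\,(p)$ have the \emph{same} Jacobi symbol, and you are left with a sum of the shape $\sum_y \epsilon(y)\bigl(1+\left(\frac{y^2-4mn}{p}\right)\bigr)e(y/p)$, where $\epsilon(y)$ is that common symbol. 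To collapse this you need the further identity $\epsilon(y)=\left(\frac{y+2w}{p}\right)$, which comes from $x+w^2\overline{x}+2w=x(1+w\overline{x})^2$; only then does the sum split into two shifted Gauss sums, evaluated via Lemma \ref{Gausslemma} (iv)--(v), giving $|S(p,m,n)|\le 2\sqrt p$. Without that identity (or an equivalent device, such as comparing finite Fourier transforms in $n$, which is how the evaluation is usually proved) your argument stalls exactly at the point you call ``collecting terms.''

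The second gap is the claim that the cases $p\mid m$ or $p\mid n$ are ``harmless.'' At prime powers they are not, and this is where the inequality is delicate: for $q=p^2$ the symbol $\left(\frac{x}{p^2}\right)$ is identically $1$ on units, so $S(p^2,p,p)=p\,K(p,1,1)$, and already for $p=7$ one has $|K(7,1,1)|>2$, so $|S(p^2,p,p)|>2p=2^{\omega(p^2)}\sqrt{p^2}$. Hence a proof along your lines must carry a coprimality hypothesis (or a gcd-dependent factor) through the CRT reduction and the prime-power analysis, as Louvel does; in the paper's application the relevant gcds are bounded, so this is repairable, but it cannot be waved away. A smaller slip: with your CRT parametrisation the correct factorisation is $S(q_1q_2,m,n)=S\bigl(q_1,m\overline{q_2}_{q_1},n\overline{q_2}_{q_1}\bigr)\,S\bigl(q_2,m\overline{q_1}_{q_2},n\overline{q_1}_{q_2}\bigr)$, with no Jacobi prefactor and with both arguments twisted by the same inverse; the identity you wrote is not quite right, though since only absolute values matter this particular inaccuracy is harmless for the bound.
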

\begin{proof}
See \cite{Louvel}.
\end{proof}

\begin{lemma}\label{Weilsestimate}
For the sum defined by \eqref{Kloosterman} we have
\begin{equation*}
|K(q,m,n)|\leq \tau(q)\,q^{\frac{1}{2}}\,(q,m,n)^{\frac{1}{2}}\,.
\end{equation*}
\end{lemma}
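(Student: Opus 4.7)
The plan is to reduce the bound to prime-power moduli via the Chinese Remainder Theorem, invoke Weil's Riemann hypothesis for curves in the prime case, and handle higher prime powers by an elementary computation; the gcd factor $(q,m,n)^{1/2}$ will emerge naturally from tracking the contribution of each local component.

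First, I would establish multiplicativity: for coprime $q_1,q_2$, the CRT change of variables $x = x_1 q_2 \overline{q_2}_{q_1} + x_2 q_1 \overline{q_1}_{q_2}$ transforms the exponent cleanly and yields the factorization
\[
K(q_1 q_2, m, n) = K\bigl(q_1, m\overline{q_2}_{q_1}, n\overline{q_2}_{q_1}\bigr)\,K\bigl(q_2, m\overline{q_1}_{q_2}, n\overline{q_1}_{q_2}\bigr).
\]
Since the gcds $(q_i, m\overline{q_j}_{q_i}, n\overline{q_j}_{q_i}) = (q_i, m, n)$ are preserved by multiplication by a unit, it suffices to prove the prime-power bound $|K(p^a,m,n)| \leq 2\,p^{a/2}(p^a,m,n)^{1/2}$. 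The divisor factor then assembles correctly, since $\prod_{p^a \| q} 2 = 2^{\omega(q)} \leq \tau(q)$.

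Second, for $a=1$ with $(mn,p)=1$, I would cite Weil's classical bound $|K(p,m,n)| \leq 2\sqrt{p}$, which is the Riemann hypothesis for the Artin--Schreier-type curve associated to the Kloosterman sum; this is the one deep input. The degenerate cases are elementary: if $p$ divides exactly one of $m,n$ then $K(p,m,n)=-1$, and if $p\mid(m,n)$ then $K(p,m,n)$ becomes a Ramanujan sum bounded by $p-1$, which is compatible with the $(p,m,n)^{1/2}=p^{1/2}$ saving.

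Third, for $a\geq 2$ and odd $p$ with $(mn,p)=1$, I would apply the standard substitution $x = u(1+p^{\lceil a/2\rceil}t)$, where $u$ runs over representatives of $(\mathbb{Z}/p^{\lceil a/2\rceil})^*$ and $t$ runs modulo $p^{\lfloor a/2\rfloor}$; expanding $\overline{x}_{p^a}$ to first order in $t$ and executing the complete $t$-summation collapses the sum to a quadratic congruence $m u^2 \equiv n \pmod{p^{\lfloor a/2\rfloor}}$ with at most two solutions in $u$, each contributing $p^{\lfloor a/2\rfloor}$, whence $|K(p^a,m,n)| \leq 2\,p^{a/2}$. The case $p=2$ proceeds by the same template with routine modifications. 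When $p^b \mid (m,n)$ with $b<a$, a direct reduction gives $K(p^a, p^b m', p^b n') = p^b K(p^{a-b}, m', n')$, and applying the already established bound produces the required $(p^a,m,n)^{1/2}$ factor; when $p^a\mid (m,n)$ the sum is trivially $\phi(p^a)\leq p^a$, again consistent.

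The one genuinely hard step is Weil's estimate in the prime case, which I would quote as a black box from algebraic geometry; all remaining steps are bookkeeping with complete exponential sums and the Chinese Remainder Theorem.
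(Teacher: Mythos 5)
The paper gives no argument of its own here---it simply cites Iwaniec--Kowalski---and your outline is the standard textbook route to that bound: twisted multiplicativity via CRT, Weil's bound at primes, elementary analysis of $K(p^a,m,n)$ for $a\ge 2$, and the reduction $K(p^a,p^bm',p^bn')=p^bK(p^{a-b},m',n')$ to absorb the gcd. Most of your bookkeeping is right: the factorization $K(q_1q_2,m,n)=K(q_1,m\overline{q_2},n\overline{q_2})K(q_2,m\overline{q_1},n\overline{q_1})$, the invariance of the gcd under unit twists, the degenerate prime cases, and the assembly $2^{\omega(q)}\le\tau(q)$.

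There is, however, a concrete gap in the prime-power case when the exponent $a\ge 3$ is odd. With $\alpha=\lceil a/2\rceil$, $\beta=\lfloor a/2\rfloor$, your substitution $x=u(1+p^{\alpha}t)$ and the $t$-summation leave the condition $mu^2\equiv n\ (\mathrm{mod}\ p^{\beta})$, which constrains $u$ only modulo $p^{\beta}$, while $u$ runs over units modulo $p^{\alpha}=p^{\beta+1}$. So for odd $a$ there are up to $2p$ admissible $u$, not ``at most two,'' and the count you describe gives only $2p^{(a+1)/2}$; this loss of $\sqrt{p}$ propagates to a final bound genuinely weaker than the lemma (for $q=p^3$ it yields $2p^{2}$ instead of $\tau(p^3)p^{3/2}=4p^{3/2}$). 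The missing step is the second-order stationary-phase evaluation: write $u=u_0+p^{\beta}s$ with $s$ modulo $p$, expand $\overline{u}$ to second order, observe that the linear term in $s$ is divisible by $p^{\beta}$ precisely because of the congruence, and the $s$-sum becomes a quadratic Gauss sum modulo $p$ with leading coefficient $n\overline{u}_0^{\,3}$, of modulus $\sqrt{p}$; this restores the bound $2p^{a/2}$. Relatedly, dismissing $p=2$ as ``routine modifications'' hides real issues (the unit group modulo $2^a$ is not cyclic and a square can have four square roots), although since $\tau(2^a)\ge 3$ for $a\ge 2$ there is enough slack to absorb a worse local constant there.
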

\begin{proof}
See \cite{Iwaniec}.
\end{proof}

\begin{lemma}\label{minimum}
For any real number $\xi$ and all integers $N_1$, $N_2$ with $N_1< N_2$, we have
\begin{equation*}
\sum\limits_{n=N_1+1}^{N_2} e(\xi n)\ll\min \left\{N_2-N_1,\,  \|\xi\|^{-1} \right\}\,.
\end{equation*}
\end{lemma}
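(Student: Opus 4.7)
The plan is to handle the two quantities inside the minimum separately and then take the smaller. The bound by $N_2-N_1$ is immediate from the triangle inequality, since each term $e(\xi n)$ has modulus one, so no work is required there. The substantive task is to show the bound by $\|\xi\|^{-1}$, which I would derive from the geometric series formula.

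First I would dispose of the degenerate case in which $\xi\in\mathbb{Z}$: then $e(\xi n)=1$ for every $n$, the sum equals $N_2-N_1$, and simultaneously $\|\xi\|=0$ so $\|\xi\|^{-1}=\infty$; thus $\min\{N_2-N_1,\|\xi\|^{-1}\}=N_2-N_1$ and the inequality is trivial. Assuming $\xi\notin\mathbb{Z}$, I would apply the finite geometric series formula to write
\begin{equation*}
\sum_{n=N_1+1}^{N_2} e(\xi n)=e(\xi(N_1+1))\,\frac{e(\xi(N_2-N_1))-1}{e(\xi)-1},
\end{equation*}
and then estimate the modulus using $|e(\xi(N_2-N_1))-1|\le 2$ and $|e(\xi)-1|=2|\sin(\pi\xi)|$, yielding
\begin{equation*}
\left|\sum_{n=N_1+1}^{N_2} e(\xi n)\right|\le \frac{1}{|\sin(\pi\xi)|}.
\end{equation*}

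The key quantitative input is then the elementary inequality $|\sin(\pi t)|\ge 2\|t\|$ for every real $t$. This reduces, by the $1$-periodicity and symmetry of both sides, to showing $\sin(\pi t)\ge 2t$ on $[0,1/2]$, which follows from the concavity of $\sin(\pi t)$ on that interval together with the boundary values at $t=0$ and $t=1/2$. Plugging this in gives $|\sin(\pi\xi)|^{-1}\le \tfrac12\|\xi\|^{-1}$, and combining with the trivial bound yields the claim with an absolute implied constant.

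I do not expect any genuine obstacle: every step is standard, and the only mildly non-trivial ingredient is the inequality $|\sin(\pi t)|\ge 2\|t\|$, which is a textbook fact. The structure of the argument — treat the integer case trivially, sum the geometric progression otherwise, and convert the sine lower bound into a bound on $\|\xi\|^{-1}$ — is entirely routine.
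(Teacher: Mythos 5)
Your proof is correct, and it is exactly the standard geometric-series argument (trivial bound by $N_2-N_1$, summing the progression to get $|\sin(\pi\xi)|^{-1}$, then $|\sin(\pi t)|\ge 2\|t\|$) that underlies the result the paper simply cites from Nathanson, Lemma 4.7, without reproducing a proof. No gaps; your treatment of the case $\xi\in\mathbb{Z}$ and the concavity justification of $\sin(\pi t)\ge 2t$ on $[0,1/2]$ are both sound.
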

\begin{proof}
See (\cite{Nathanson}, Lemma 4.7).
\end{proof}

\begin{lemma}\label{multiplicative1}
Let
\begin{equation}\label{coprimeq1234}
(q_1 q_2, q_3 q_4)=(q_1, q_2)=(q_3, q_4)=1\,.
\end{equation}
For the function defined by \eqref{lambdaq1q2lmn} we have
\begin{align*}
\lambda(q_1 q_2, q_3 q_4, l, m, n)
&=\lambda\left(q_1, q_3, l\overline{(q_2 q_4)}_{q_1 q_3},
m\overline{(q_2 q_4)}_{q_1 q_3}, n\overline{(q_2 q_4)}_{q_1 q_3}\right)\\
&\times\lambda\left(q_2, q_4, l\overline{(q_1 q_3)}_{q_2 q_4},
m\overline{(q_1 q_3)}_{q_2 q_4}, n\overline{(q_1 q_3)}_{q_2 q_4}\right)\,.
\end{align*}
\end{lemma}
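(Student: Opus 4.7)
The plan is to prove the result by regrouping the moduli via the Chinese Remainder Theorem. First I would note that the hypothesis \eqref{coprimeq1234} forces $q_1, q_2, q_3, q_4$ to be pairwise coprime, so $(q_1 q_3, q_2 q_4) = 1$ and the product $q_1 q_2 q_3 q_4$ can be grouped either as $(q_1 q_2)(q_3 q_4)$ (the native grouping of the left-hand side) or as $(q_1 q_3)(q_2 q_4)$ (the grouping on the right-hand side). For each of the summation variables I would introduce the CRT decomposition
$$x \equiv q_2 q_4\,\overline{(q_2 q_4)}_{q_1 q_3}\,x_1 + q_1 q_3\,\overline{(q_1 q_3)}_{q_2 q_4}\,x_2 \pmod{q_1 q_2 q_3 q_4},$$
with $x_1$ running over residues mod $q_1 q_3$ and $x_2$ over residues mod $q_2 q_4$, and analogously for $y$ and $z$. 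This gives a bijection between triples mod $q_1 q_2 q_3 q_4$ and pairs of triples mod $q_1 q_3$ and mod $q_2 q_4$, with $x \equiv x_1 \pmod{q_1 q_3}$ and $x \equiv x_2 \pmod{q_2 q_4}$.

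Next I would translate the two defining congruences through this decomposition. The condition $x^2+y^2+z^2+z+1 \equiv 0 \pmod{q_1 q_2}$ splits by CRT into a congruence modulo $q_1$, which only sees $(x_1, y_1, z_1)$, and one modulo $q_2$, which only sees $(x_2, y_2, z_2)$. Similarly $x^2+y^2+z^2+z+2 \equiv 0 \pmod{q_3 q_4}$ splits into a congruence modulo $q_3$ in $(x_1, y_1, z_1)$ and one modulo $q_4$ in $(x_2, y_2, z_2)$. Together, these reassemble into exactly the pair of congruences defining $\lambda(q_1, q_3, \cdot)$ at $(x_1, y_1, z_1)$ and the pair defining $\lambda(q_2, q_4, \cdot)$ at $(x_2, y_2, z_2)$, with no cross-coupling between the two groups of variables.

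Finally, for the additive character I would use the identity
$$\frac{lx}{q_1 q_2 q_3 q_4} \equiv \frac{l\,\overline{(q_2 q_4)}_{q_1 q_3}\,x_1}{q_1 q_3} + \frac{l\,\overline{(q_1 q_3)}_{q_2 q_4}\,x_2}{q_2 q_4} \pmod{1},$$
obtained by cancelling $q_2 q_4$ (respectively $q_1 q_3$) in numerator and denominator of each of the two pieces of the CRT decomposition of $x$, together with the analogous identities for $my$ and $nz$. Thus $e\bigl((lx+my+nz)/(q_1 q_2 q_3 q_4)\bigr)$ factorises as a product of an additive character on $(x_1, y_1, z_1)$ modulo $q_1 q_3$, with twist $\overline{(q_2 q_4)}_{q_1 q_3}$, and an additive character on $(x_2, y_2, z_2)$ modulo $q_2 q_4$, with twist $\overline{(q_1 q_3)}_{q_2 q_4}$. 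Substituting everything back and separating the independent sums over $(x_1, y_1, z_1)$ and $(x_2, y_2, z_2)$ yields the stated factorisation. The only real obstacle is bookkeeping: one must verify carefully that reduction mod $q_1$ and mod $q_3$ pulls out $x_1$ while reduction mod $q_2$ and mod $q_4$ pulls out $x_2$, and that the two inverse classes $\overline{(q_2 q_4)}_{q_1 q_3}$ and $\overline{(q_1 q_3)}_{q_2 q_4}$ attach to the correct factors; no analytic estimate enters the argument.
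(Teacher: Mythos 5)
Your argument is correct, and it reaches the lemma by a genuinely different and more elementary route than the paper. You apply the Chinese Remainder Theorem directly to the summation variables: after observing that \eqref{coprimeq1234} makes $q_1,q_2,q_3,q_4$ pairwise coprime (so $(q_1q_3,q_2q_4)=1$ and both inverses exist), the decomposition $x\equiv q_2q_4\overline{(q_2q_4)}_{q_1q_3}x_1+q_1q_3\overline{(q_1q_3)}_{q_2q_4}x_2 \;(q_1q_2q_3q_4)$, and its analogues for $y,z$, splits the condition $x^2+y^2+z^2+z+1\equiv 0\;(q_1q_2)$ into congruences modulo $q_1$ seen only by $(x_1,y_1,z_1)$ and modulo $q_2$ seen only by $(x_2,y_2,z_2)$, and likewise for the condition modulo $q_3q_4$; and since $e(\cdot)$ has period one, $e\bigl((lx+my+nz)/(q_1q_2q_3q_4)\bigr)$ factors exactly into the two twisted characters with twists $\overline{(q_2q_4)}_{q_1q_3}$ and $\overline{(q_1q_3)}_{q_2q_4}$, so the triple sum factors into the two $\lambda$'s as claimed. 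The paper proceeds differently: it detects the two congruence conditions by orthogonality (formula \eqref{well-known}), expresses both sides of the identity entirely in terms of Gauss sums $G(q,m,n)$, applies the multiplicativity properties of Lemma \ref{Gausslemma} to each side, and then matches the two expansions term by term via substitutions such as $x\rightarrow\overline{(q_2q_4)}_{q_1q_3}x$ inside the Gauss sums (\eqref{Gausssum1}--\eqref{Gausssum12}). Your CRT argument is shorter and avoids Gauss sums altogether, which makes the source of the inverse twists transparent; what the paper's longer computation buys is that the same Gauss-sum machinery is reused immediately afterwards in Lemma \ref{lambdaupperbound}, where $\lambda$ is evaluated and bounded via Sali\'e sums, so the author's route doubles as preparation for that estimate. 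The only bookkeeping point your write-up should make explicit, and does, is that the summand depends only on the residue classes of $x,y,z$, so replacing the range $1\leq x,y,z\leq q_1q_2q_3q_4$ by the CRT parametrisation is a genuine bijection of complete residue systems.
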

\begin{proof}
On the one hand \eqref{Gausssums}, \eqref{lambdaq1q2lmn}, \eqref{coprimeq1234}, the well-known formula
\begin{equation}\label{well-known}
\sum\limits_{k=1}^{d}e\left(\frac{kn}{d}\right)
=\begin{cases}d\,,\;\mbox{  if  } \; d\,|\,n,\\
0\,,\; \mbox{  if } \; d\nmid n
\end{cases}
\end{equation}
and Lemma \ref{Gausslemma} give us
\begin{align*}
&\lambda(q_1q_2, q_3q_4, l, m, n)
=\frac{1}{q_1q_2q_3q_4}\sum\limits_{1\leq x, y,z\leq q_1q_2q_3q_4}e\left(\frac{lx+my+nz}{q_1q_2q_3q_4}\right)\nonumber\\
&\times\sum\limits_{1\leq h_1\leq q_1q_2}e\left(\frac{h_1(x^2+y^2+z^2+z+1)}{q_1q_2}\right)
\sum\limits_{1\leq h_2\leq q_3q_4}e\left(\frac{h_2(x^2+y^2+z^2+z+2)}{q_3q_4}\right)\nonumber\\
&=\frac{1}{q_1q_2q_3q_4}\sum\limits_{1\leq h_1\leq q_1q_2}e\left(\frac{h_1}{q_1q_2}\right)
\sum\limits_{1\leq h_2\leq q_3q_4}e\left(\frac{2h_2}{q_3q_4}\right)G(q_1q_2q_3q_4, h_1q_3q_4+h_2q_1q_2, l)\nonumber\\
&\times G(q_1q_2q_3q_4, h_1q_3q_4+h_2q_1q_2, m)\,G(q_1q_2q_3q_4, h_1q_3q_4+h_2q_1q_2, n+h_1q_3q_4+h_2q_1q_2)\nonumber\\
&=\frac{1}{q_1q_2q_3q_4}\sum\limits_{1\leq h_1\leq q_1q_2}e\left(\frac{h_1}{q_1q_2}\right)
G(q_1q_2, h_1q^2_3q^2_4, l)G(q_1q_2, h_1q^2_3q^2_4, m)G(q_1q_2, h_1q^2_3q^2_4, n+h_1q_3q_4)\nonumber\\
&\times\sum\limits_{1\leq h_2\leq q_3q_4}e\left(\frac{2h_2}{q_3q_4}\right)\,
G(q_3q_4, h_2q^2_1q^2_2, l)\,G(q_3q_4, h_2q^2_1q^2_2, m)\,G(q_3q_4, h_2q^2_1q^2_2, n+h_2q_1q_2)\nonumber\\
\end{align*}

\begin{align}\label{lambdaq1234lmnest1}
&=\frac{1}{q_1q_2q_3q_4}\sum\limits_{1\leq h_1\leq q_1\atop{1\leq h_2\leq q_2}}e\left(\frac{h_1q_2+h_2q_1}{q_1q_2}\right)
G(q_1q_2, h_1q_2q^2_3q^2_4+h_2q_1q^2_3q^2_4, l)\nonumber\\
&\times G(q_1q_2, h_1q_2q^2_3q^2_4+h_2q_1q^2_3q^2_4, m)\,
G(q_1q_2, h_1q_2q^2_3q^2_4+h_2q_1q^2_3q^2_4, n+h_1q_2q_3q_4+h_2q_1q_3q_4)\nonumber\\
&\times\sum\limits_{1\leq h_3\leq q_3\atop{1\leq h_4\leq q_4}}e\left(\frac{2(h_3q_4+h_4q_3)}{q_3q_4}\right)
G(q_3q_4, h_3q^2_1q^2_2q_4+h_4q^2_1q^2_2q_3, l)\nonumber\\
&\times G(q_3q_4, h_3q^2_1q^2_2q_4+h_4q^2_1q^2_2q_3, m)\,
G(q_3q_4, h_3q^2_1q^2_2q_4+h_4q^2_1q^2_2q_3, n+h_3q_1q_2q_4+h_4q_1q_2q_3)\nonumber\\
&=\frac{1}{q_1q_2q_3q_4}\sum\limits_{1\leq h_1\leq q_1\atop{1\leq h_2\leq q_2}}e\left(\frac{h_1q_2+h_2q_1}{q_1q_2}\right)
G(q_1, h_1q^2_2q^2_3q^2_4, l)\,G(q_2, h_2q^2_1q^2_3q^2_4, l)\,G(q_1, h_1q^2_2q^2_3q^2_4, m)\nonumber\\
&\times G(q_2, h_2q^2_1q^2_3q^2_4, m)\,G(q_1, h_1q^2_2q^2_3q^2_4, n+h_1q_2q_3q_4)\,
G(q_2, h_2q^2_1q^2_3q^2_4, n+h_2q_1q_3q_4)\nonumber\\
&\times\sum\limits_{1\leq h_3\leq q_3\atop{1\leq h_4\leq q_4}}e\left(\frac{2(h_3q_4+h_4q_3)}{q_3q_4}\right)
G(q_3, h_3q^2_1q^2_2q^2_4, l)\,G(q_4, h_4q^2_1q^2_2q^2_3, l)\,G(q_3, h_3q^2_1q^2_2q^2_4, m)\nonumber\\
&\times G(q_4, h_4q^2_1q^2_2q^2_3, m)\,G(q_3, h_3q^2_1q^2_2q^2_4, n+h_3q_1q_2q_4)\, G(q_4, h_4q^2_1q^2_2q^2_3, n+h_4q_1q_2q_3)\,.
\end{align}
On the other hand \eqref{Gausssums}, \eqref{lambdaq1q2lmn},  \eqref{coprimeq1234}, \eqref{well-known} and Lemma \ref{Gausslemma} imply
\begin{align*}
&\lambda\left(q_1, q_3, l\overline{(q_2q_4)}_{q_1q_3}, m\overline{(q_2q_4)}_{q_1q_3}, n\overline{(q_2q_4)}_{q_1q_3}\right)
\lambda\left(q_2, q_4, l\overline{(q_1 q_3)}_{q_2 q_4}, m\overline{(q_1 q_3)}_{q_2 q_4}, n\overline{(q_1q_3)}_{q_2q_4}\right)\nonumber\\
&=\frac{1}{q_1q_2q_3q_4}\sum\limits_{1\leq x_1, y_1, z_1\leq q_1q_3}
e\left(\frac{l\overline{(q_2q_4)}_{q_1q_3}x_1+m\overline{(q_2q_4)}_{q_1q_3}y_1+ n\overline{(q_2q_4)}_{q_1q_3}z_1}{q_1q_3}\right)\nonumber\\
&\times \sum\limits_{1\leq h_1\leq q_1}e\left(\frac{h_1(x_1^2+y_1^2+z_1^2+z_1+1)}{q_1}\right)
\sum\limits_{1\leq h_3\leq q_3}e\left(\frac{h_3(x_1^2+y_1^2+z_1^2+z_1+2)}{q_3}\right)\nonumber\\
&\times\sum\limits_{1\leq x_2, y_2, z_2\leq q_2q_4}
e\left(\frac{l\overline{(q_1q_3)}_{q_2q_4}x_2+m\overline{(q_1q_3)}_{q_2q_4}y_2+ n\overline{(q_1q_3)}_{q_2q_4}z_2}{q_2q_4}\right)\nonumber\\
&\times \sum\limits_{1\leq h_2\leq q_2}e\left(\frac{h_2(x_2^2+y_2^2+z_2^2+z_2+1)}{q_2}\right)
\sum\limits_{1\leq h_4\leq q_4}e\left(\frac{h_4(x_2^2+y_2^2+z_2^2+z_2+2)}{q_4}\right)\nonumber\\
&=\frac{1}{q_1q_2q_3q_4}\sum\limits_{1\leq h_1\leq q_1\atop{1\leq h_2\leq q_2}}e\left(\frac{h_1q_2+h_2q_1}{q_1q_2}\right)
\sum\limits_{1\leq h_3\leq q_3\atop{1\leq h_4\leq q_4}}e\left(\frac{2(h_3q_4+h_4q_3)}{q_3q_4}\right)\nonumber\\
&\times G\big(q_1q_3,h_1q_3+h_3q_1, l\overline{(q_2q_4)}_{q_1q_3}\big)\,
G\big(q_1q_3,h_1q_3+h_3q_1, m\overline{(q_2q_4)}_{q_1q_3}\big) \nonumber\\
&\times G\big(q_1q_3,h_1q_3+h_3q_1, n\overline{(q_2q_4)}_{q_1q_3}+h_1q_3+h_3q_1\big)\,
G\big(q_2q_4,h_2q_4+h_4q_2, l\overline{(q_1q_3)}_{q_2q_4}\big)\nonumber\\
&\times G\big(q_2q_4,h_2q_4+h_4q_2, m\overline{(q_1q_3)}_{q_2q_4}\big)\,
G\big(q_2q_4,h_2q_4+h_4q_2, n\overline{(q_1q_3)}_{q_2q_4}+h_2q_4+h_4q_2\big)\nonumber\\
\end{align*}

\begin{align}\label{lambdaq1234lmnest2}
&=\frac{1}{q_1q_2q_3q_4}\sum\limits_{1\leq h_1\leq q_1\atop{1\leq h_2\leq q_2}}e\left(\frac{h_1q_2+h_2q_1}{q_1q_2}\right)
G\big(q_1, h_1q^2_3, l\overline{(q_2q_4)}_{q_1q_3}\big)\,G\big(q_2,h_2q^2_4, l\overline{(q_1q_3)}_{q_2q_4}\big)\nonumber\\
&\times G\big(q_1, h_1q^2_3, m\overline{(q_2q_4)}_{q_1q_3}\big)\,G\big(q_2,h_2q^2_4, m\overline{(q_1q_3)}_{q_2q_4}\big)\nonumber\\
&\times G\big(q_1, h_1q^2_3,  n\overline{(q_2q_4)}_{q_1q_3}+h_1q_3\big)\,
G\big(q_2, h_2q^2_4, n\overline{(q_1q_3)}_{q_2q_4}+h_2q_4\big)\nonumber\\
&\times\sum\limits_{1\leq h_3\leq q_3\atop{1\leq h_4\leq q_4}}e\left(\frac{2(h_3q_4+h_4q_3)}{q_3q_4}\right)
G\big(q_3, h_3q^2_1, l\overline{(q_2q_4)}_{q_1q_3}\big)\, G\big(q_4, h_4q^2_2, l\overline{(q_1q_3)}_{q_2q_4}\big)\nonumber\\
&\times G\big(q_3, h_3q^2_1, m\overline{(q_2q_4)}_{q_1q_3}\big)\,G\big(q_4, h_4q^2_2, m\overline{(q_1q_3)}_{q_2q_4}\big)\nonumber\\
&\times G\big(q_3, h_3q^2_1, n\overline{(q_2q_4)}_{q_1q_3}+h_3q_1\big)\,
G\big(q_4, h_4q^2_2, n\overline{(q_1q_3)}_{q_2q_4}+h_4q_2\big)\,.
\end{align}
Using the substitution $x\rightarrow \overline{(q_2q_4)}_{q_1q_3}x$ we obtain
\begin{align}\label{Gausssum1}
G(q_1, h_1q^2_2q^2_3q^2_4, l)&=\sum\limits_{x=1}^{q_1}e\left(\frac{ h_1q^2_2q^2_3q^2_4x^2+lx}{q_1}\right)
=\sum\limits_{x=1}^{q_1}e\left(\frac{h_1q^2_3 x^2+l\overline{(q_2q_4)}_{q_1q_3}x}{q_1}\right)\nonumber\\
&=G\big(q_1, h_1q^2_3, l\overline{(q_2q_4)}_{q_1q_3}\big)\,.
\end{align}
Proceeding in a similar way, we get
\begin{align}
\label{Gausssum2}
&G(q_2, h_2q^2_1q^2_3q^2_4, l)=G\big(q_2,h_2q^2_4, l\overline{(q_1q_3)}_{q_2q_4}\big)\,,\\
\label{Gausssum3}
&G(q_1, h_1q^2_2q^2_3q^2_4, m)=G\big(q_1, h_1q^2_3, m\overline{(q_2q_4)}_{q_1q_3}\big)\,,\\
\label{Gausssum4}
&G(q_2, h_2q^2_1q^2_3q^2_4, m)=G\big(q_2,h_2q^2_4, m\overline{(q_1q_3)}_{q_2q_4}\big)\,,\\
\label{Gausssum5}
&G(q_1, h_1q^2_2q^2_3q^2_4, n+h_1q_2q_3q_4)=G\big(q_1, h_1q^2_3,  n\overline{(q_2q_4)}_{q_1q_3}+h_1q_3\big)\,,\\
\label{Gausssum6}
&G(q_2, h_2q^2_1q^2_3q^2_4, n+h_2q_1q_3q_4)=G\big(q_2, h_2q^2_4, n\overline{(q_1q_3)}_{q_2q_4}+h_2q_4\big)\,,\\
\label{Gausssum7}
&G(q_3, h_3q^2_1q^2_2q^2_4, l)=G\big(q_3, h_3q^2_1, l\overline{(q_2q_4)}_{q_1q_3}\big)\,,\\
\label{Gausssum8}
&G(q_4, h_4q^2_1q^2_2q^2_3, l)=G\big(q_4, h_4q^2_2, l\overline{(q_1q_3)}_{q_2q_4}\big)\,,\\
\label{Gausssum9}
&G(q_3, h_3q^2_1q^2_2q^2_4, m)=G\big(q_3, h_3q^2_1, m\overline{(q_2q_4)}_{q_1q_3}\big)\,,\\
\label{Gausssum10}
&G(q_4, h_4q^2_1q^2_2q^2_3, m)=G\big(q_4, h_4q^2_2, m\overline{(q_1q_3)}_{q_2q_4}\big)\,,\\
\label{Gausssum11}
&G(q_3, h_3q^2_1q^2_2q^2_4, n+h_3q_1q_2q_4)=G\big(q_3, h_3q^2_1, n\overline{(q_2q_4)}_{q_1q_3}+h_3q_1\big)\,,\\
\label{Gausssum12}
&G(q_4, h_4q^2_1q^2_2q^2_3, n+h_4q_1q_2q_3)=G\big(q_4, h_4q^2_2, n\overline{(q_1q_3)}_{q_2q_4}+h_4q_2\big)\,.
\end{align}
Bearing in mind \eqref{lambdaq1234lmnest1} -- \eqref{Gausssum12} we complete the proof of the lemma.
\end{proof}

\begin{lemma}\label{lambdaupperbound}
Let $8\nmid q_1q_2$ and $(q_1, q_2)=1$.
For the function defined by \eqref{lambdaq1q2lmn} the upper bound
\begin{equation*}
\lambda(q_1, q_2, l, m, n)\ll q_1q_2\tau(q_1q_2)2^{\omega(q_1)}2^{\omega(q_2)}(q_1q_2, l,m,n)
\end{equation*}
holds.
In particular we have
\begin{equation*}
\lambda(q_1, q_2, l, m, n)\ll (q_1 q_2)^{1+\varepsilon}(q_1q_2, l,m,n)\quad\mbox{ and }
\quad\lambda(q_1, q_2)\ll(q_1 q_2)^{2+\varepsilon}\,.
\end{equation*}
\end{lemma}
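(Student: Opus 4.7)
The plan is to reduce the bound to prime power moduli via the multiplicativity established in Lemma \ref{multiplicative1}, and then for each prime power to open the congruence with additive characters and evaluate the resulting Gauss sums explicitly.

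First, applying Lemma \ref{multiplicative1} iteratively (peeling one prime at a time off $q_1q_2$) expresses $\lambda(q_1,q_2,l,m,n)$ as a product over the primes $p\mid q_1q_2$ of factors of the form $\lambda(p^{a_p},1,l',m',n')$ or $\lambda(1,p^{a_p},l',m',n')$, where for each $p$ the arguments $l',m',n'$ are $l,m,n$ multiplied by some unit modulo $p^{a_p}$. Since unit twists preserve $p$-adic valuations, $\prod_{p}(p^{a_p},l',m',n')=(q_1q_2,l,m,n)$, and it suffices to prove
\[
\lambda(p^{a},1,l,m,n),\ \lambda(1,p^{a},l,m,n)\ \ll\ p^{a}(a+1)(p^{a},l,m,n)
\]
for each prime power, with $p=2$ restricted to $a\leq 2$ by the assumption $8\nmid q_1q_2$.

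Second, fix an odd prime $p$ and treat $\lambda(p^{a},1,l,m,n)$ (the other case being analogous). Detecting the congruence $x^{2}+y^{2}+z^{2}+z+1\equiv 0\,(p^{a})$ by \eqref{well-known} and carrying out the $x,y,z$ summations in terms of Gauss sums (the $z$-sum absorbing the linear $nz$ into the linear argument of a third Gauss sum) gives
\[
\lambda(p^{a},1,l,m,n)=\frac{1}{p^{a}}\sum_{h=1}^{p^{a}}e\!\left(\frac{h}{p^{a}}\right)G(p^{a},h,l)\,G(p^{a},h,m)\,G(p^{a},h,h+n).
\]
Partition the $h$-sum by $d=(h,p^{a})=p^{j}$ and write $h=p^{j}h'$ with $(h',p^{a-j})=1$. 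By Lemma \ref{Gausslemma}(iii) the three Gauss sums vanish unless $p^{j}$ divides $l$, $m$, and (since $p^{j}\mid h$) $n$, so only those $j\leq a$ with $p^{j}\mid(l,m,n)$ contribute.

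Third, for $j<a$ apply Lemma \ref{Gausslemma}(iv)--(v) to each reduced Gauss sum $G(p^{a-j},h',\cdot)$: the three resulting Jacobi symbols cube into a single $\bigl(\tfrac{h'}{p^{a-j}}\bigr)$; the three exponentials combine, after expanding $(h'+n/p^{j})^{2}$ and simplifying with $1-\overline{4}_{p^{a-j}}\equiv 3\,\overline{4}_{p^{a-j}}$, into a single phase $e\!\bigl((\alpha h'+\beta\,\overline{h'}_{p^{a-j}})/p^{a-j}\bigr)$; and $|G(p^{a-j},1)^{3}|=p^{3(a-j)/2}$. The inner $h'$-sum is then precisely a Sali\'e sum $S(p^{a-j},\alpha,\beta)$, bounded by $2\,p^{(a-j)/2}$ via Lemma \ref{Salieestimate}. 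Hence the contribution from level $j$ is $\ll p^{-a}\cdot p^{3j}\cdot p^{3(a-j)/2}\cdot p^{(a-j)/2}=p^{a+j}$, and summing over admissible $j$ yields $\ll p^{a}(p^{a},l,m,n)$. The boundary level $j=a$ contributes $p^{2a}$ only when $p^{a}\mid(l,m,n)$, and the small cases $p=2$ with $a\in\{1,2\}$ are disposed of by a trivial residue count. Multiplying the prime-power bounds and using $\tau(q),2^{\omega(q)}\ll q^{\varepsilon}$ produces both stated refinements.

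The main technical step is verifying that, once the three Gauss sums with the three distinct linear arguments $l,m,h+n$ are written out explicitly, the cross terms generated by $(h'+n/p^{j})^{2}$ align so that the inner $h'$-sum genuinely takes the Sali\'e shape required by Lemma \ref{Salieestimate}; this is a compact but delicate exponential identity and is the crux of the proof.
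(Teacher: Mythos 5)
Your proposal is correct and uses essentially the same machinery as the paper: detect the congruence with additive characters, reduce each Gauss sum by its gcd with the modulus via Lemma \ref{Gausslemma}(iii), convert with Lemma \ref{Gausslemma}(iv)--(v) so that the Jacobi symbols cube into one symbol and the inner sum becomes a Sali\'e sum, then apply Lemma \ref{Salieestimate}. The only structural difference is that you first localize completely to prime powers by iterating Lemma \ref{multiplicative1}, whereas the paper performs the identical computation directly on the odd composite moduli (partitioning the $h_i$-sums over divisors $d_i\mid q_i$) and uses Lemma \ref{multiplicative1} only to strip the factor $2^h$; this affects nothing of substance (at most the absolute constants hidden in the stated $\tau(q_1q_2)2^{\omega(q_1)}2^{\omega(q_2)}$ factor, which is immaterial for the $\varepsilon$-forms actually used later).
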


\begin{proof}

\textbf{Case 1.} $2\nmid q_1q_2$.

From \eqref{Gausssums}, \eqref{Saliesum}, \eqref{lambdaq1q2lmn}, \eqref{well-known} and Lemma \ref{Gausslemma} we deduce

\begin{align*}
&\lambda(q_1, q_2, l, m, n)
=\frac{1}{q_1q_2}\sum\limits_{1\leq x, y,z\leq q_1q_2}e\left(\frac{lx+my+nz}{q_1q_2}\right)\nonumber\\
&\times\sum\limits_{1\leq h_1\leq q_1}e\left(\frac{h_1(x^2+y^2+z^2+z+1)}{q_1}\right)
\sum\limits_{1\leq h_2\leq q_2}e\left(\frac{h_2(x^2+y^2+z^2+z+1)}{q_2}\right)\nonumber\\
&=\frac{1}{q_1q_2}\sum\limits_{1\leq h_1\leq q_1}e\left(\frac{h_1}{q_1}\right)
\sum\limits_{1\leq h_2\leq q_2}e\left(\frac{2h_2}{q_2}\right) G(q_1 q_2, h_1 q_2+h_2 q_1, l)\nonumber\\
&\times G(q_1 q_2, h_1 q_2+h_2 q_1, m)\,G(q_1 q_2, h_1 q_2+h_2 q_1, n+h_1 q_2+h_2 q_1) \nonumber\\
&=\frac{1}{q_1 q_2}\sum\limits_{1\leq h_1\leq q_1}e\left(\frac{h_1}{q_1}\right)
G(q_1, h_1 q^2_2, l)\, G(q_1, h_1 q^2_2, m)\, G(q_1, h_1 q^2_2, n+h_1 q_2)\nonumber\\
&\times\sum\limits_{1\leq h_2\leq q_2}e\left(\frac{2h_2}{q_2}\right)
G(q_2, h_2 q^2_1, l)\, G(q_2, h_2 q^2_1, m)\, G(q_2, h_2 q^2_1, n+h_2 q_1)\nonumber\\
&=\frac{1}{q_1 q_2}\sum\limits_{d_1 | q_1}\sum\limits_{1\leq h_1\leq q_1\atop{(h_1, q_1)=\frac{q_1}{d_1}}}
e\left(\frac{h_1}{q_1}\right)G(q_1, h_1 q^2_2, l)\, G(q_1, h_1 q^2_2, m)\, G(q_1, h_1 q^2_2, n+h_1 q_2)\nonumber\\
&\times\sum\limits_{d_2 | q_2}\sum\limits_{1\leq h_2\leq q_2\atop{(h_2, q_2)=\frac{q_2}{d_2}}}
e\left(\frac{2h_2}{q_2}\right)G(q_2, h_2 q^2_1, l)\, G(q_2, h_2 q^2_1, m)\, G(q_2, h_2 q^2_1, n+h_2 q_1)\nonumber\\
&=q^2_1 q^2_2\sum\limits_{d_1 | q_1\atop{\frac{q_1}{d_1} | (l, m, n)}}\frac{1}{d^3_1}
\sum\limits_{1\leq r_1\leq d_1\atop{(r_1, d_1)=1}}e\left(\frac{r_1}{d_1}\right)
G(d_1, r_1 q^2_2, l d_1 q^{-1}_1) G(d_1, r_1 q^2_2, m d_1 q^{-1}_1) G(d_1, r_1 q^2_2, n d_1 q^{-1}_1+r_1q_2)\nonumber\\
&\times\sum\limits_{d_2 | q_2\atop{\frac{q_2}{d_2} | (l, m, n)}}\frac{1}{d^3_2}
\sum\limits_{1\leq r_2\leq d_2\atop{(r_2, d_2)=1}}e\left(\frac{2r_2}{d_2}\right)
G(d_2, r_2 q^2_1, l d_2 q^{-1}_2) G(d_2, r_2 q^2_1, m d_2 q^{-1}_2) G(d_2, r_2 q^2_1, n d_2 q^{-1}_2+r_2q_1)\nonumber\\
&=q^2_1 q^2_2\sum\limits_{d_1 | q_1\atop{\frac{q_1}{d_1} | (l, m, n)}}\frac{G^3(d_1, 1)}{d^3_1}
\sum\limits_{1\leq r_1\leq d_1\atop{(r_1, d_1)=1}}\left(\frac{r_1}{d_1}\right)^3\nonumber\\
&\times e\left(\frac{r_1-\overline{(4 r_1 q^2_2)}_{d_1}\big(r^2_1q^2_2+(l^2+m^2+n^2)d^2_1 q^{-2}_1+2r_1q_2nd_1q^{-1}_1\big)}{d_1}\right)\nonumber\\
\end{align*}

\begin{align}\label{lambdaq1q2lmnest1}
&\times\sum\limits_{d_2 | q_2\atop{\frac{q_2}{d_2} | (l, m, n)}}\frac{G^3(d_2, 1)}{d^3_2}
\sum\limits_{1\leq r_2\leq d_2\atop{(r_2, d_2)=1}}\left(\frac{r_2}{d_2}\right)^3\nonumber\\
&\times e\left(\frac{2r_2-\overline{(4 r_2 q^2_1)}_{d_2}\big(r^2_2q^2_1+(l^2+m^2+n^2)d^2_2 q^{-2}_2+2r_2q_1nd_2q^{-1}_2\big)}{d_2}\right)\nonumber\\
&=q^2_1 q^2_2\sum\limits_{d_1 | q_1\atop{\frac{q_1}{d_1} | (l, m, n)}}
e\left(-\frac{\overline{(2q_2)}_{d_1}n}{q_1}\right)\frac{G^3(d_1, 1)}{d^3_1}
S\big(d_1, 1-\overline{4}_{d_1}, -\overline{(4q^2_2)}_{d_1}(l^2+m^2+n^2)d^2_1 q^{-2}_1\big)\nonumber\\
&\times\sum\limits_{d_2 | q_2\atop{\frac{q_2}{d_2} | (l, m, n)}}
e\left(-\frac{\overline{(2q_1)}_{d_2}n}{q_2}\right)\frac{G^3(d_2, 1)}{d^3_2}
S\big(d_2, 2-\overline{4}_{d_2}, -\overline{(4q^2_1)}_{d_2}(l^2+m^2+n^2)d^2_2 q^{-2}_2\big)\,.
\end{align}
Now \eqref{lambdaq1q2lmnest1}, Lemma \ref{Gausslemma} and Lemma \ref{Salieestimate} yield
\begin{align}\label{lambdaq1q2lmnest2}
\lambda(q_1, q_2, l, m, n)&\ll q^2_1 q^2_2
\sum\limits_{d_1 | q_1\atop{\frac{q_1}{d_1} | (l, m, n)}}d^{-3}_1d^\frac{3}{2}_1d^\frac{1}{2}_12^{\omega(d_1)}
\sum\limits_{d_2 | q_2\atop{\frac{q_2}{d_2} | (l, m, n)}}d^{-3}_2d^\frac{3}{2}_2d^\frac{1}{2}_22^{\omega(d_2)}\nonumber\\
&\ll q^2_1 q^2_2 2^{\omega(q_1)} 2^{\omega(q_2)}\sum\limits_{d_1 | q_1\atop{\frac{q_1}{d_1} | (l, m, n)}}d^{-1}_1
\sum\limits_{d_2 | q_2\atop{\frac{q_2}{d_2} | (l, m, n)}}d^{-1}_2\nonumber\\
&\ll q^2_1 q^2_2 2^{\omega(q_1)} 2^{\omega(q_2)}\sum\limits_{r_1 | (q_1, l, m, n)}q^{-1}_1r_1
\sum\limits_{r_2 | (q_2, l, m, n)}q^{-1}_2r_2\nonumber\\
&\ll q_1q_2\tau(q_1q_2)2^{\omega(q_1)}2^{\omega(q_2)}(q_1q_2, l,m,n)\,.
\end{align}

\textbf{Case 2.} $q_1=2^h q'_1$, where $2\nmid q'_1$, $h\leq2$ and $2\nmid q_2$.

Using \eqref{lambdaq1q2lmnest2}, Lemma \ref{multiplicative1} and the
trivial estimate $|\lambda(2^h, 1, l, m, n)|\leq8^h$ we obtain
\begin{align}\label{lambdaq1q2lmnest3}
\lambda(2^h q'_1, q_2, l, m, n)&=
\lambda\left(2^h, 1, l\overline{(q'_1 q_2)}_{2^h}, m \overline{(q'_1 q_2)}_{2^h}, n \overline{(q'_1 q_2)}_{2^h}\right)\nonumber\\
&\times\lambda\left(q'_1, q_2, l \overline{(2^h)}_{q'_1 q_2}, m \overline{(2^h)}_{q'_1 q_2}, n \overline{(2^h)}_{q'_1 q_2}\right)\nonumber\\
&\ll q'_1 q_2\tau(q'_1 q_2)2^{\omega(q'_1)}2^{\omega(q_2)}
\left(q'_1 q_2, l \overline{(2^h)}_{q'_1 q_2}, m \overline{(2^h)}_{q'_1 q_2}, n \overline{(2^h)}_{q'_1 q_2}\right)\nonumber\\
&\ll q_1 q_2\tau(q_1 q_2)2^{\omega(q_1)}2^{\omega(q_2)} (q_1 q_2, l, m, n)\,.
\end{align}

\textbf{Case 3.} $q_2=2^h q'_2$, where $2\nmid q'_2$, $h\leq2$ and $2\nmid q_1$.

By \eqref{lambdaq1q2lmnest2}, Lemma \ref{multiplicative1} and the trivial estimate $|\lambda(1, 2^h, l, m, n)|\leq8^h$ we get
\begin{align}\label{lambdaq1q2lmnest4}
\lambda(q_1, 2^h q'_2, l, m, n)&=
\lambda\left(1, 2^h, l\overline{(q_1 q'_2)}_{2^h}, m \overline{(q_1 q'_2)}_{2^h}, n \overline{(q_1 q'_2)}_{2^h}\right)\nonumber\\
&\times\lambda\left(q_1, q'_2, l\overline{(2^h)}_{q_1 q'_2}, m\overline{(2^h)}_{q_1q'_2}, n\overline{(2^h)}_{q_1q'_2}\right)\nonumber\\
&\ll q_1 q'_2\tau(q_1 q'_2)2^{\omega(q_1)}2^{\omega(q'_2)}
\left(q_1 q'_2, l \overline{(2^h)}_{q_1 q'_2}, m \overline{(2^h)}_{q_1 q'_2}, n \overline{(2^h)}_{q_1 q'_2}\right)\nonumber\\
&\ll q_1 q_2\tau(q_1 q_2)2^{\omega(q_1)}2^{\omega(q_2)} (q_1 q_2, l, m, n)\,.
\end{align}
Now the lemma follows from \eqref{lambdaq1q2lmnest2} -- \eqref{lambdaq1q2lmnest4}.
\end{proof}
Using Lemma \ref{lambdaupperbound} and arguing as in \cite{Fan} we derive the following lemma.
\begin{lemma}\label{Lambda123est}
Assume that $8\nmid q_1q_2$ and $H_0\geq2$. Then for the sums
\begin{equation*}%\label{Lambda12}
\Lambda_1=\sum\limits_{1\leq l\leq H_0}\frac{|\lambda(q_1, q_2, l, 0, 0)|}{l}\,,\quad
\Lambda_2=\sum\limits_{1\leq n\leq H_0}\frac{|\lambda(q_1, q_2, 0, 0, n)|}{n}\,,
\end{equation*}
\begin{equation*}%\label{Lambda34}
\Lambda_3=\sum\limits_{1\leq l, m\leq H_0}\frac{|\lambda(q_1, q_2, l, m, 0)|}{l m}\,,\quad
\Lambda_4=\sum\limits_{1\leq l, n\leq H_0}\frac{|\lambda(q_1, q_2, l, 0, n)|}{l n}
\end{equation*}
and
\begin{equation*}%\label{Lambda5}
\Lambda_5=\sum\limits_{1\leq l, m, n\leq H_0}\frac{|\lambda(q_1, q_2, l, m, n)|}{l m n}
\end{equation*}
the estimations
\begin{equation*}%\label{Lambda12est}
\Lambda_i\ll (q_1q_2)^{1+\varepsilon} H^\varepsilon_0\,,\quad  i=1, 2, 3, 4, 5
\end{equation*}
hold.
\end{lemma}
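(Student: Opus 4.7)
The plan is to apply Lemma \ref{lambdaupperbound} directly and then reduce each $\Lambda_i$ to a product of elementary arithmetic sums. Since the argument is identical for all five sums, I would state it once for the most delicate case $\Lambda_5$ and indicate how $\Lambda_1,\dots,\Lambda_4$ follow by dropping coordinates.

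First, I would invoke Lemma \ref{lambdaupperbound} to replace the absolute value of $\lambda(q_1,q_2,l,m,n)$ with the bound $(q_1q_2)^{1+\varepsilon}(q_1q_2,l,m,n)$. For sums like $\Lambda_1$ in which two of the three coordinates are $0$, I would use the convention $(a,0)=a$ so that $(q_1q_2,l,0,0)=(q_1q_2,l)$; for $\Lambda_3,\Lambda_4,\Lambda_5$ I would simply use the monotonicity estimate $(q_1q_2,l,m,n)\le (q_1q_2,l)$. This transforms each $\Lambda_i$ into $(q_1q_2)^{1+\varepsilon}$ times a product of elementary sums, exactly one of which is of the form $\sum_{l\le H_0}(q_1q_2,l)/l$ while the other one or two factors are harmonic sums $\sum_{m\le H_0}1/m\ll\log H_0\ll H_0^\varepsilon$.

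The only step requiring actual estimation is the GCD-weighted harmonic sum. I would handle it by grouping $l$ according to $d=(q_1q_2,l)$ and the substitution $l=dl'$:
\begin{equation*}
\sum_{l\le H_0}\frac{(q_1q_2,l)}{l}\le\sum_{d\mid q_1q_2}d\sum_{\substack{l\le H_0\\ d\mid l}}\frac{1}{l}=\sum_{d\mid q_1q_2}\sum_{l'\le H_0/d}\frac{1}{l'}\ll\tau(q_1q_2)\log H_0\ll (q_1q_2)^\varepsilon H_0^\varepsilon,
\end{equation*}
using the standard divisor bound $\tau(n)\ll n^\varepsilon$.

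Combining these two ingredients gives $\Lambda_i\ll (q_1q_2)^{1+\varepsilon}H_0^\varepsilon$ for each $i=1,\dots,5$, with the extra $(q_1q_2)^\varepsilon$ factor absorbed into $(q_1q_2)^{1+\varepsilon}$ by renaming $\varepsilon$. There is no genuine obstacle here: the work is entirely front-loaded into Lemma \ref{lambdaupperbound}, and the only mild point is keeping track of the extra harmonic $\log$-factors and the divisor bound, both of which are swallowed by the $H_0^\varepsilon$ and $(q_1q_2)^\varepsilon$ in the final estimate.
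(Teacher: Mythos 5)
Your proposal is correct and follows exactly the route the paper intends: the paper gives no written proof of this lemma, merely stating that it follows from Lemma \ref{lambdaupperbound} by "arguing as in \cite{Fan}", and your argument (insert the bound $\lambda(q_1,q_2,l,m,n)\ll (q_1q_2)^{1+\varepsilon}(q_1q_2,l,m,n)$, drop coordinates via $(q_1q_2,l,m,n)\le(q_1q_2,l)$, and control $\sum_{l\le H_0}(q_1q_2,l)/l\ll\tau(q_1q_2)\log H_0$ by grouping over divisors) is precisely the omitted detail. Note only that Lemma \ref{lambdaupperbound} also requires $(q_1,q_2)=1$, which is tacitly assumed here since the lemma is applied with $q_1=d_1^2$, $q_2=d_2^2$ and $(d_1,d_2)=1$.
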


\begin{lemma}\label{multiplicative2}
Let
\begin{equation}\label{2coprimeq1234}
(q_1 q_2, q_3 q_4)=(q_1, q_2)=(q_3, q_4)=1\,.
\end{equation}
For the function defined by \eqref{lambdaq1q2lmnast} we have
\begin{align*}
\lambda^\ast(q_1 q_2, q_3 q_4, l, m, n)
&=\lambda^\ast\left(q_1, q_3, l\overline{(q_2 q_4)}_{q_1 q_3},
m\overline{(q_2 q_4)}_{q_1 q_3}, n\overline{(q_2 q_4)}_{q_1 q_3}\right)\\
&\times\lambda^\ast\left(q_2, q_4, l\overline{(q_1 q_3)}_{q_2 q_4},
m\overline{(q_1 q_3)}_{q_2 q_4}, n\overline{(q_1 q_3)}_{q_2 q_4}\right)\,.
\end{align*}
\end{lemma}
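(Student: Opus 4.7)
The plan is to mirror the proof of Lemma \ref{multiplicative1}, with the simplification that $z$ now appears only linearly in the polynomials $x^2+y^2+z+1$ and $x^2+y^2+z+2$. Starting from \eqref{lambdaq1q2lmnast}, I would detect the two congruences by means of the orthogonality formula \eqref{well-known}, introducing auxiliary variables $h_1\in[1,q_1q_2]$ and $h_2\in[1,q_3q_4]$. The sums over $x$ and $y$ then produce the Gauss sums $G(q_1q_2q_3q_4,\,h_1q_3q_4+h_2q_1q_2,\,l)$ and $G(q_1q_2q_3q_4,\,h_1q_3q_4+h_2q_1q_2,\,m)$, while the $z$-sum, being a pure linear exponential sum of modulus $q_1q_2q_3q_4$, returns the factor $q_1q_2q_3q_4$ together with the condition $q_1q_2q_3q_4\mid n+h_1q_3q_4+h_2q_1q_2$. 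This is the only structural change compared to Lemma \ref{multiplicative1}, where the $z$-sum also produced a Gauss sum.

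Next, exactly as in the proof of Lemma \ref{multiplicative1}, I would use CRT to decompose $h_1=h_1'q_2+h_2'q_1$ and $h_2=h_3'q_4+h_4'q_3$, and then apply Lemma \ref{Gausslemma}(ii) twice to split each of the two Gauss sums into a product of four Gauss sums of moduli $q_1,q_2,q_3,q_4$. The LHS is thereby written as a fourfold sum over $(h_1',h_2',h_3',h_4')$ of a product of eight Gauss sums, restricted by the single $z$-congruence above.

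Expanding the right-hand side by the same mechanism, applied separately to $\lambda^\ast(q_1,q_3,\ldots)$ and $\lambda^\ast(q_2,q_4,\ldots)$, produces an analogous eightfold product of Gauss sums whose arguments are twisted by the unit factors $\overline{(q_2q_4)}_{q_1q_3}$ and $\overline{(q_1q_3)}_{q_2q_4}$, together with two separate $z$-congruences modulo $q_1q_3$ and $q_2q_4$. Changes of variable of the form $x\mapsto\overline{(q_2q_4)}_{q_1q_3}x$ inside each Gauss sum, in the spirit of the identities \eqref{Gausssum1}--\eqref{Gausssum4} and \eqref{Gausssum7}--\eqref{Gausssum10}, align the eight Gauss sum factors on the two sides; the coprimality hypothesis \eqref{2coprimeq1234} ensures the required invertibility.

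The main obstacle is matching the single $z$-congruence $h_1q_3q_4+h_2q_1q_2\equiv -n\,(q_1q_2q_3q_4)$ on the LHS with the pair of $z$-congruences on the RHS. This is a CRT check: under the decompositions above, the LHS condition is equivalent to its reductions modulo $q_1q_3$ (giving $h_1'q_2q_3q_4+h_3'q_1q_2q_4\equiv -n$) and modulo $q_2q_4$ (giving $h_2'q_1q_3q_4+h_4'q_1q_2q_3\equiv -n$), which are precisely the conditions produced on the RHS once the dual variables on both sides are identified via $h_i^\ast=h_i'$. Together with the alignment of the Gauss sums this yields the desired identity.
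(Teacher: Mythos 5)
Your proposal is correct and follows essentially the same route as the paper: orthogonality to detect the two congruences, CRT splitting of the dual variables, Lemma \ref{Gausslemma} to factor the $x$- and $y$-Gauss sums, and the substitutions $x\mapsto\overline{(q_2q_4)}_{q_1q_3}x$, $x\mapsto\overline{(q_1q_3)}_{q_2q_4}x$ to align the factors. The only difference is notational: the paper keeps the $z$-sum packaged as the degenerate Gauss sum $G(q,0,\cdot)$ and matches those factors through the identities \eqref{Gausssum5ast}, \eqref{Gausssum6ast}, \eqref{Gausssum11ast}, \eqref{Gausssum12ast}, whereas you evaluate it as a divisibility condition and verify the match by the unit-multiplication/CRT check, which is the same computation.
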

\begin{proof}
On the one hand \eqref{Gausssums}, \eqref{lambdaq1q2lmnast}, \eqref{well-known}, \eqref{2coprimeq1234} and Lemma \ref{Gausslemma} yield
\begin{align*}
&\lambda^\ast(q_1q_2, q_3q_4, l, m, n)
=\frac{1}{q_1q_2q_3q_4}\sum\limits_{1\leq x, y,z\leq q_1q_2q_3q_4}e\left(\frac{lx+my+nz}{q_1q_2q_3q_4}\right)\nonumber\\
&\times\sum\limits_{1\leq h_1\leq q_1q_2}e\left(\frac{h_1(x^2+y^2+z+1)}{q_1q_2}\right)
\sum\limits_{1\leq h_2\leq q_3q_4}e\left(\frac{h_2(x^2+y^2+z+2)}{q_3q_4}\right)\nonumber\\
\end{align*}

\begin{align}\label{lambdaastq1234lmnest1}
&=\frac{1}{q_1q_2q_3q_4}\sum\limits_{1\leq h_1\leq q_1q_2}e\left(\frac{h_1}{q_1q_2}\right)
\sum\limits_{1\leq h_2\leq q_3q_4}e\left(\frac{2h_2}{q_3q_4}\right)G(q_1q_2q_3q_4, h_1q_3q_4+h_2q_1q_2, l)\nonumber\\
&\times G(q_1q_2q_3q_4, h_1q_3q_4+h_2q_1q_2, m)\,G(q_1q_2q_3q_4, 0, n+h_1q_3q_4+h_2q_1q_2)\nonumber\\
&=\frac{1}{q_1q_2q_3q_4}\sum\limits_{1\leq h_1\leq q_1q_2}e\left(\frac{h_1}{q_1q_2}\right)
G(q_1q_2, h_1q^2_3q^2_4, l)\, G(q_1q_2, h_1q^2_3q^2_4, m)\, G(q_1q_2, 0, n+h_1q_3q_4)\nonumber\\
&\times\sum\limits_{1\leq h_2\leq q_3q_4}e\left(\frac{2h_2}{q_3q_4}\right)G(q_3q_4, h_2q^2_1q^2_2, l)\,
G(q_3q_4, h_2q^2_1q^2_2, m)\, G(q_3q_4, 0, n+h_2q_1q_2)\nonumber\\
&=\frac{1}{q_1q_2q_3q_4}\sum\limits_{1\leq h_1\leq q_1\atop{1\leq h_2\leq q_2}}e\left(\frac{h_1q_2+h_2q_1}{q_1q_2}\right)
G(q_1q_2, h_1q_2q^2_3q^2_4+h_2q_1q^2_3q^2_4, l)\nonumber\\
&\times G(q_1q_2, h_1q_2q^2_3q^2_4+h_2q_1q^2_3q^2_4, m)\,G(q_1q_2, 0, n+h_1q_2q_3q_4+h_2q_1q_3q_4)\nonumber\\
&\times\sum\limits_{1\leq h_3\leq q_3\atop{1\leq h_4\leq q_4}}e\left(\frac{2(h_3q_4+h_4q_3)}{q_3q_4}\right)
G(q_3q_4, h_3q^2_1q^2_2q_4+h_4q^2_1q^2_2q_3, l)\nonumber\\
&\times G(q_3q_4, h_3q^2_1q^2_2q_4+h_4q^2_1q^2_2q_3, m)\,G(q_3q_4, 0, n+h_3q_1q_2q_4+h_4q_1q_2q_3)\nonumber\\
&=\frac{1}{q_1q_2q_3q_4}\sum\limits_{1\leq h_1\leq q_1\atop{1\leq h_2\leq q_2}}e\left(\frac{h_1q_2+h_2q_1}{q_1q_2}\right)
G(q_1, h_1q^2_2q^2_3q^2_4, l)\,G(q_2, h_2q^2_1q^2_3q^2_4, l)\,G(q_1, h_1q^2_2q^2_3q^2_4, m)\nonumber\\
&\times G(q_2, h_2q^2_1q^2_3q^2_4, m)\,G(q_1, 0, n+h_1q_2q_3q_4)\, G(q_2, 0, n+h_2q_1q_3q_4)\nonumber\\
&\times\sum\limits_{1\leq h_3\leq q_3\atop{1\leq h_4\leq q_4}}e\left(\frac{2(h_3q_4+h_4q_3)}{q_3q_4}\right)
G(q_3, h_3q^2_1q^2_2q^2_4, l)\,G(q_4, h_4q^2_1q^2_2q^2_3, l)\,G(q_3, h_3q^2_1q^2_2q^2_4, m)\nonumber\\
&\times G(q_4, h_4q^2_1q^2_2q^2_3, m)\,G(q_3, 0, n+h_3q_1q_2q_4)\, G(q_4, 0, n+h_4q_1q_2q_3)\,.
\end{align}
On the other hand \eqref{Gausssums}, \eqref{lambdaq1q2lmnast}, \eqref{well-known}, \eqref{2coprimeq1234} and Lemma \ref{Gausslemma} give us
\begin{align*}
&\lambda^\ast\left(q_1, q_3, l\overline{(q_2q_4)}_{q_1q_3}, m\overline{(q_2q_4)}_{q_1q_3}, n\overline{(q_2q_4)}_{q_1q_3}\right)
\lambda^\ast\left(q_2, q_4, l\overline{(q_1 q_3)}_{q_2 q_4}, m\overline{(q_1 q_3)}_{q_2 q_4}, n\overline{(q_1q_3)}_{q_2q_4}\right)\nonumber\\
&=\frac{1}{q_1q_2q_3q_4}\sum\limits_{1\leq x_1, y_1, z_1\leq q_1q_3}
e\left(\frac{l\overline{(q_2q_4)}_{q_1q_3}x_1+m\overline{(q_2q_4)}_{q_1q_3}y_1+ n\overline{(q_2q_4)}_{q_1q_3}z_1}{q_1q_3}\right)\nonumber\\
&\times \sum\limits_{1\leq h_1\leq q_1}e\left(\frac{h_1(x_1^2+y_1^2+z_1^2+z_1+1)}{q_1}\right)
\sum\limits_{1\leq h_3\leq q_3}e\left(\frac{h_3(x_1^2+y_1^2+z_1^2+z_1+2)}{q_3}\right)\nonumber\\
&\times\sum\limits_{1\leq x_2, y_2, z_2\leq q_2q_4}
e\left(\frac{l\overline{(q_1q_3)}_{q_2q_4}x_2+m\overline{(q_1q_3)}_{q_2q_4}y_2+ n\overline{(q_1q_3)}_{q_2q_4}z_2}{q_2q_4}\right)\nonumber\\
\end{align*}

\begin{align}\label{lambdaastq1234lmnest2}
&\times \sum\limits_{1\leq h_2\leq q_2}e\left(\frac{h_2(x_2^2+y_2^2+z_2^2+z_2+1)}{q_2}\right)
\sum\limits_{1\leq h_4\leq q_4}e\left(\frac{h_4(x_2^2+y_2^2+z_2^2+z_2+2)}{q_4}\right)\nonumber\\
&=\frac{1}{q_1q_2q_3q_4}\sum\limits_{1\leq h_1\leq q_1\atop{1\leq h_2\leq q_2}}e\left(\frac{h_1q_2+h_2q_1}{q_1q_2}\right)
\sum\limits_{1\leq h_3\leq q_3\atop{1\leq h_4\leq q_4}}e\left(\frac{2(h_3q_4+h_4q_3)}{q_3q_4}\right)\nonumber\\
&\times G\big(q_1q_3,h_1q_3+h_3q_1, l\overline{(q_2q_4)}_{q_1q_3}\big)\,
G\big(q_1q_3, h_1q_3+h_3q_1, m\overline{(q_2q_4)}_{q_1q_3}\big) \nonumber\\
&\times G\big(q_1q_3, 0, n\overline{(q_2q_4)}_{q_1q_3}+h_1q_3+h_3q_1\big)\,
G\big(q_2q_4, h_2q_4+h_4q_2, l\overline{(q_1q_3)}_{q_2q_4}\big)\nonumber\\
&\times G\big(q_2q_4,h_2q_4+h_4q_2, m\overline{(q_1q_3)}_{q_2q_4}\big)\,
G\big(q_2q_4, 0, n\overline{(q_1q_3)}_{q_2q_4}+h_2q_4+h_4q_2\big)\nonumber\\
&=\frac{1}{q_1q_2q_3q_4}\sum\limits_{1\leq h_1\leq q_1\atop{1\leq h_2\leq q_2}}e\left(\frac{h_1q_2+h_2q_1}{q_1q_2}\right)
G\big(q_1, h_1q^2_3, l\overline{(q_2q_4)}_{q_1q_3}\big)\,G\big(q_2, h_2q^2_4, l\overline{(q_1q_3)}_{q_2q_4}\big)\nonumber\\
&\times G\big(q_1, h_1q^2_3, m\overline{(q_2q_4)}_{q_1q_3}\big)\,G\big(q_2, h_2q^2_4, m\overline{(q_1q_3)}_{q_2q_4}\big)\nonumber\\
&\times G\big(q_1, 0,  n\overline{(q_2q_4)}_{q_1q_3}+h_1q_3\big)\,G\big(q_2, 0, n\overline{(q_1q_3)}_{q_2q_4}+h_2q_4\big)\nonumber\\
&\times\sum\limits_{1\leq h_3\leq q_3\atop{1\leq h_4\leq q_4}}e\left(\frac{2(h_3q_4+h_4q_3)}{q_3q_4}\right)
G\big(q_3, h_3q^2_1, l\overline{(q_2q_4)}_{q_1q_3}\big)\, G\big(q_4, h_4q^2_2, l\overline{(q_1q_3)}_{q_2q_4}\big)\nonumber\\
&\times G\big(q_3, h_3q^2_1, m\overline{(q_2q_4)}_{q_1q_3}\big)\,G\big(q_4, h_4q^2_2, m\overline{(q_1q_3)}_{q_2q_4}\big)\nonumber\\
&\times G\big(q_3, 0, n\overline{(q_2q_4)}_{q_1q_3}+h_3q_1\big)\,G\big(q_4, 0, n\overline{(q_1q_3)}_{q_2q_4}+h_4q_2\big)\,.
\end{align}
Using the substitutions $x\rightarrow \overline{(q_2q_4)}_{q_1q_3}x$, $x\rightarrow \overline{(q_1q_3)}_{q_2q_4}x$
and working as in Lemma \ref{multiplicative1} we deduce
\begin{align}
\label{Gausssum1ast}
&G(q_1, h_1q^2_2q^2_3q^2_4, l)=G\big(q_1, h_1q^2_3, l\overline{(q_2q_4)}_{q_1q_3}\big)\,,\\
\label{Gausssum2ast}
&G(q_2, h_2q^2_1q^2_3q^2_4, l)=G\big(q_2,h_2q^2_4, l\overline{(q_1q_3)}_{q_2q_4}\big)\,,\\
\label{Gausssum3ast}
&G(q_1, h_1q^2_2q^2_3q^2_4, m)=G\big(q_1, h_1q^2_3, m\overline{(q_2q_4)}_{q_1q_3}\big)\,,\\
\label{Gausssum4ast}
&G(q_2, h_2q^2_1q^2_3q^2_4, m)=G\big(q_2,h_2q^2_4, m\overline{(q_1q_3)}_{q_2q_4}\big)\,,\\
\label{Gausssum5ast}
&G(q_1, 0, n+h_1q_2q_3q_4)=G\big(q_1, 0, n\overline{(q_2q_4)}_{q_1q_3}+h_1q_3\big)\,,\\
\label{Gausssum6ast}
&G(q_2, 0, n+h_2q_1q_3q_4)=G\big(q_2, 0, n\overline{(q_1q_3)}_{q_2q_4}+h_2q_4\big)\,,\\
\label{Gausssum7ast}
&G(q_3, h_3q^2_1q^2_2q^2_4, l)=G\big(q_3, h_3q^2_1, l\overline{(q_2q_4)}_{q_1q_3}\big)\,,\\
\label{Gausssum8ast}
&G(q_4, h_4q^2_1q^2_2q^2_3, l)=G\big(q_4, h_4q^2_2, l\overline{(q_1q_3)}_{q_2q_4}\big)\,,\\
\label{Gausssum9ast}
&G(q_3, h_3q^2_1q^2_2q^2_4, m)=G\big(q_3, h_3q^2_1, m\overline{(q_2q_4)}_{q_1q_3}\big)\,,\\
\label{Gausssum10ast}
&G(q_4, h_4q^2_1q^2_2q^2_3, m)=G\big(q_4, h_4q^2_2, m\overline{(q_1q_3)}_{q_2q_4}\big)\,,\\
\label{Gausssum11ast}
&G(q_3, 0, n+h_3q_1q_2q_4)=G\big(q_3, 0, n\overline{(q_2q_4)}_{q_1q_3}+h_3q_1\big)\,,\\
\label{Gausssum12ast}
&G(q_4, 0, n+h_4q_1q_2q_3)=G\big(q_4, 0, n\overline{(q_1q_3)}_{q_2q_4}+h_4q_2\big)\,.
\end{align}
Summarizing \eqref{lambdaastq1234lmnest1} --  \eqref{Gausssum12ast} we complete the proof of the lemma.
\end{proof}

\begin{lemma}\label{lambdaastupperbound}
Let $8\nmid q_1q_2$.
For the function defined by \eqref{lambdaq1q2lmnast} the upper bound
\begin{equation*}
\lambda^\ast(q_1, q_2, l, m, n)\ll q^2_1q^2_2\tau^2(q_1 q_2)
\end{equation*}
holds.
In particular we have
\begin{equation*}
\lambda^\ast(q_1, q_2, l, m, n)\ll (q_1q_2)^{2+\varepsilon}\,.
\end{equation*}
\end{lemma}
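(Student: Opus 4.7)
The plan is to follow the three-case scheme used in the proof of Lemma \ref{lambdaupperbound}, exploiting the crucial fact that the defining polynomials $x^2+y^2+z+1$ and $x^2+y^2+z+2$ are \emph{linear} in $z$. This linearity forces the $z$-sum in the Fourier expansion to collapse to a delta condition, so no Sali\'e sums enter and the argument is strictly simpler than that of Lemma \ref{lambdaupperbound}.

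\textbf{Case 1:} $2 \nmid q_1 q_2$. I would detect the two congruence conditions via \eqref{well-known}, introduce auxiliary variables $h_1\in[1,q_1]$ and $h_2\in[1,q_2]$, and swap the summation. The $x$- and $y$-sums produce Gauss sums $G(q_1q_2, h_1q_2+h_2q_1, l)$ and $G(q_1q_2, h_1q_2+h_2q_1, m)$, while the $z$-sum is $\sum_{z=1}^{q_1q_2} e\!\left(\tfrac{(h_1q_2+h_2q_1+n)z}{q_1q_2}\right)$, which by orthogonality equals $q_1q_2$ when $q_1q_2 \mid h_1q_2+h_2q_1+n$ and vanishes otherwise. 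Since $(q_1,q_2)=1$, reducing this condition modulo $q_1$ and modulo $q_2$ separately pins down $h_1^\star \equiv -n\,\overline{q_2}_{q_1}$ and $h_2^\star \equiv -n\,\overline{q_1}_{q_2}$ uniquely, collapsing the double sum to the single term
\[
\lambda^\ast(q_1,q_2,l,m,n) = e\!\left(\tfrac{h_1^\star}{q_1}+\tfrac{2h_2^\star}{q_2}\right) G(q_1q_2, h_1^\star q_2+h_2^\star q_1, l)\,G(q_1q_2, h_1^\star q_2+h_2^\star q_1, m).
\]
Applying Lemma \ref{Gausslemma}(ii) factors each Gauss sum into a $q_1$-part and a $q_2$-part, of moduli $h_1^\star q_2^2$ and $h_2^\star q_1^2$ respectively. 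Setting $d_i=(h_i^\star,q_i)=(n,q_i)$ (using $(q_1,q_2)=1$), parts (iii)--(v) of Lemma \ref{Gausslemma} give $|G(q_i,h_i^\star q_{3-i}^2,l)| \leq \sqrt{d_i q_i}$ when $d_i\mid l$ and $0$ otherwise, and similarly for $m$. Multiplying the four resulting bounds yields $|\lambda^\ast(q_1,q_2,l,m,n)| \leq d_1 d_2\, q_1 q_2 \leq q_1^2 q_2^2$, which is already stronger than the claim.

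\textbf{Cases 2 and 3:} $q_1=2^h q_1'$ or $q_2=2^h q_2'$ with $h\leq 2$ and the remaining factor odd. I would apply Lemma \ref{multiplicative2} to split $\lambda^\ast$ into a product of $\lambda^\ast(2^h,1,\cdot)$ (or $\lambda^\ast(1,2^h,\cdot)$) and an odd-part factor, bound the power-of-two factor trivially by $(2^h)^2 \leq 16$, and apply Case 1 to the odd part. Combining yields $\lambda^\ast(q_1,q_2,l,m,n) \ll q_1^2 q_2^2$, and absorbing the constant into $\tau^2(q_1q_2)$ gives the claimed inequality; the ``in particular'' statement follows from $\tau(n)\ll n^\varepsilon$. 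The main technical burden is just the bookkeeping of Gauss-sum factorizations as in Lemma \ref{lambdaupperbound}; no genuinely new idea is required, and in fact the $\tau^2(q_1q_2)$ slack in the stated bound is not needed in Case 1.
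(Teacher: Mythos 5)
Your proposal is correct, and in the main case it takes a genuinely different route from the paper. In Case 1 the paper keeps the variable $z$ alive: it detects the congruences, factors out the two Gauss sums in $x,y$, evaluates the $h_1$- and $h_2$-sums via Lemma \ref{Gausslemma}(iii)--(v) so that Kloosterman sums $K\big(d_i,z+1,\cdot\big)$ appear, bounds these by Weil's estimate (Lemma \ref{Weilsestimate}), and then sums trivially over the $q_1q_2$ values of $z$, which is exactly where the factor $q_1^2q_2^2\tau^2(q_1q_2)$ comes from. You instead exploit the linearity in $z$ at an earlier stage: performing the $z$-sum first forces $q_1q_2\mid h_1q_2+h_2q_1+n$, which by coprimality pins down a unique pair $(h_1^\star,h_2^\star)$, so the whole expression collapses to a product of two complete Gauss sums; Lemma \ref{Gausslemma}(ii)--(v) then give $|G(q_i,h_i^\star q_{3-i}^2,l)|\leq\sqrt{d_iq_i}$ with $d_i=(n,q_i)$, hence the bound $q_1q_2\,(q_1q_2,n)$, with vanishing unless $d_i\mid l$ and $d_i\mid m$. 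This avoids Kloosterman sums and Weil's inequality entirely and is in fact sharper than the paper's bound (it is tight, e.g.\ $\lambda^\ast(p,1)=p^2$, and would even strengthen Lemma \ref{Lambdaast123est}); the price is nothing beyond checking, as you implicitly do, that $(q_i/d_i,\,h_i^\star q_{3-i}^2/d_i)=1$ so that part (iv) applies. Your Cases 2 and 3 follow the paper verbatim (splitting off the $2^h$ part by Lemma \ref{multiplicative2} and bounding it by an absolute constant), and, like the paper, your argument tacitly uses $(q_1,q_2)=1$, which is harmless since $\lambda^\ast(q_1,q_2,l,m,n)=0$ when $(q_1,q_2)>1$.
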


\begin{proof}

\textbf{Case 1.} $2\nmid q_1 q_2$.

By \eqref{Gausssums}, \eqref{Kloosterman}, \eqref{lambdaq1q2lmnast}, \eqref{well-known},
Lemma \ref{Gausslemma} and Lemma \ref{Weilsestimate} derive
\begin{align*}
&\lambda^\ast(q_1, q_2, l, m, n)
=\frac{1}{q_1q_2}\sum\limits_{1\leq x, y,z\leq q_1q_2}e\left(\frac{lx+my+nz}{q_1q_2}\right)\nonumber\\
&\times\sum\limits_{1\leq h_1\leq q_1}e\left(\frac{h_1(x^2+y^2+z+1)}{q_1}\right)
\sum\limits_{1\leq h_2\leq q_2}e\left(\frac{h_2(x^2+y^2+z+1)}{q_2}\right)\nonumber\\
&=\frac{1}{q_1q_2}\sum\limits_{1\leq z\leq q_1q_2}e\left(\frac{nz}{q_1q_2}\right)
\sum\limits_{1\leq h_1\leq q_1}e\left(\frac{h_1(z+1)}{q_1}\right)
\sum\limits_{1\leq h_2\leq q_2}e\left(\frac{2h_2(z+1)}{q_2}\right) \nonumber\\
&\times G(q_1 q_2, h_1 q_2+h_2 q_1, l)\, G(q_1 q_2, h_1 q_2+h_2 q_1, m) \nonumber\\
&=\frac{1}{q_1 q_2}\sum\limits_{1\leq z\leq q_1q_2}e\left(\frac{nz}{q_1q_2}\right)
\sum\limits_{1\leq h_1\leq q_1}e\left(\frac{h_1(z+1)}{q_1}\right)
G(q_1, h_1 q^2_2, l)\, G(q_1, h_1 q^2_2, m)\nonumber\\
&\times\sum\limits_{1\leq h_2\leq q_2}e\left(\frac{2h_2(z+1)}{q_2}\right)
G(q_2, h_2 q^2_1, l)\, G(q_2, h_2 q^2_1, m)\nonumber\\
&=\frac{1}{q_1 q_2}\sum\limits_{1\leq z\leq q_1q_2}e\left(\frac{nz}{q_1q_2}\right)
\sum\limits_{d_1 | q_1}\sum\limits_{1\leq h_1\leq q_1\atop{(h_1, q_1)
=\frac{q_1}{d_1}}}e\left(\frac{h_1(z+1)}{q_1}\right)G(q_1, h_1 q^2_2, l)\, G(q_1, h_1 q^2_2, m)\nonumber\\
&\times\sum\limits_{d_2 | q_2}\sum\limits_{1\leq h_2\leq q_2\atop{(h_2, q_2)
=\frac{q_2}{d_2}}}e\left(\frac{2h_2(z+1)}{q_2}\right)G(q_2, h_2 q^2_1, l)\, G(q_2, h_2 q^2_1, m)\nonumber\\
&=q_1 q_2\sum\limits_{1\leq z\leq q_1q_2}e\left(\frac{nz}{q_1q_2}\right)
\sum\limits_{d_1 | q_1\atop{\frac{q_1}{d_1} | (l, m)}}\frac{1}{d^2_1}
\sum\limits_{1\leq r_1\leq d_1\atop{(r_1, d_1)=1}}e\left(\frac{r_1(z+1)}{d_1}\right)
G(d_1, r_1 q^2_2, l d_1 q^{-1}_1) G(d_1, r_1 q^2_2, m d_1 q^{-1}_1) \nonumber\\
&\times\sum\limits_{d_2 | q_2\atop{\frac{q_2}{d_2} | (l, m)}}\frac{1}{d^2_2}
\sum\limits_{1\leq r_2\leq d_2\atop{(r_2, d_2)=1}}e\left(\frac{2r_2(z+1)}{d_2}\right)
G(d_2, r_2 q^2_1, l d_2 q^{-1}_2) G(d_2, r_2 q^2_1, m d_2 q^{-1}_2) \nonumber\\
\end{align*}

\begin{align}\label{lambdaastq1q2lmnest1}
&=q_1 q_2\sum\limits_{1\leq z\leq q_1q_2}e\left(\frac{nz}{q_1q_2}\right)
\sum\limits_{d_1 | q_1\atop{\frac{q_1}{d_1} | (l, m)}}\frac{G^2(d_1, 1)}{d^2_1}
\sum\limits_{1\leq r_1\leq d_1\atop{(r_1, d_1)=1}}
e\left(\frac{r_1(z+1)-\overline{(4 r_1 q^2_2)}_{d_1}(l^2+m^2)d^2_1 q^{-2}_1}{d_1}\right)\nonumber\\
&\times\sum\limits_{d_2 | q_2\atop{\frac{q_2}{d_2} | (l, m)}}\frac{G^2(d_2, 1)}{d^2_2}
\sum\limits_{1\leq r_2\leq d_2\atop{(r_2, d_2)=1}}
e\left(\frac{2r_2(z+1)-\overline{(4 r_2 q^2_1)}_{d_2}(l^2+m^2)d^2_2 q^{-2}_2}{d_2}\right)\nonumber\\
&=q_1 q_2\sum\limits_{1\leq z\leq q_1q_2}e\left(\frac{nz}{q_1q_2}\right)
\sum\limits_{d_1 | q_1\atop{\frac{q_1}{d_1} | (l, m)}}\frac{G^2(d_1, 1)}{d^2_1}
K\big(d_1, z+1, -\overline{(4q^2_2)}_{d_1}(l^2+m^2)d^2_1 q^{-2}_1\big)\nonumber\\
&\times\sum\limits_{d_2 | q_2\atop{\frac{q_2}{d_2} | (l, m)}}\frac{G^2(d_2, 1)}{d^2_2}
K\big(d_2, z+1, -\overline{(4q^2_1)}_{d_2}(l^2+m^2)d^2_2 q^{-2}_2\big)\nonumber\\
&\ll q_1 q_2\sum\limits_{1\leq z\leq q_1 q_2}
\sum\limits_{d_1 | q_1\atop{\frac{q_1}{d_1} | (l, m)}}
\tau(d_1)d_1^{-\frac{1}{2}}\big(d_1, z+1,\overline{4}_{d_1}(l^2+m^2)d^2_1 q^{-2}_1\big)^{\frac{1}{2}}\nonumber\\
&\times\sum\limits_{d_2 | q_2\atop{\frac{q_2}{d_2} | (l, m)}}
\tau(d_2)d_2^{-\frac{1}{2}}\big(d_2, z+1,\overline{4}_{d_2}(l^2+m^2)d^2_2 q^{-2}_2\big)^{\frac{1}{2}}\nonumber\\
&\ll q_1 q_2\sum\limits_{1\leq z\leq q_1 q_2}\sum\limits_{d_1 | q_1\atop{\frac{q_1}{d_1} | (l, m)}}\tau(d_1)
\sum\limits_{d_2 | q_2\atop{\frac{q_2}{d_2} | (l, m)}}\tau(d_2)
\ll q^2_1 q^2_2\tau(q_1q_2)\sum\limits_{r_1 | (q_1, l, m)}\sum\limits_{r_2 | (q_2, l, m)}1\nonumber\\
&\ll q^2_1 q^2_2\tau^2(q_1q_2)\,.
\end{align}

\textbf{Case 2.} $q_1=2^h q'_1$, where $2\nmid q'_1$, $h\leq2$ and $2\nmid q_2$.

Using \eqref{lambdaastq1q2lmnest1}, Lemma \ref{multiplicative2} and the
trivial estimate $|\lambda^\ast(2^h, 1, l, m, n)|\leq8^h$ we find
\begin{align}\label{lambdaastq1q2lmnest2}
\lambda^\ast(2^h q'_1, q_2, l, m, n)&=
\lambda^\ast\left(2^h, 1, l\overline{(q'_1 q_2)}_{2^h}, m \overline{(q'_1 q_2)}_{2^h}, n \overline{(q'_1 q_2)}_{2^h}\right)\nonumber\\
&\times\lambda^\ast\left(q'_1, q_2, l \overline{(2^h)}_{q'_1 q_2}, m \overline{(2^h)}_{q'_1 q_2}, n \overline{(2^h)}_{q'_1 q_2}\right)\nonumber\\
&\ll q'^2_1 q^2_2\tau^2(q'_1 q_2)\ll q^2_1 q^2_2\tau^2(q_1 q_2)\,.
\end{align}

\textbf{Case 3.} $q_2=2^h q'_2$, where $2\nmid q'_2$, $h\leq2$ and $2\nmid q_1$.

By \eqref{lambdaastq1q2lmnest1}, Lemma \ref{multiplicative2} and the trivial estimate $|\lambda^\ast(1, 2^h, l, m, n)|\leq8^h$ we obtain
\begin{align}\label{lambdaastq1q2lmnest3}
\lambda^\ast(q_1, 2^h q'_2, l, m, n)&=
\lambda^\ast\left(1, 2^h, l\overline{(q_1 q'_2)}_{2^h}, m \overline{(q_1 q'_2)}_{2^h}, n \overline{(q_1 q'_2)}_{2^h}\right)\nonumber\\
&\times\lambda^\ast\left(q_1, q'_2, l\overline{(2^h)}_{q_1 q'_2}, m\overline{(2^h)}_{q_1q'_2}, n\overline{(2^h)}_{q_1q'_2}\right)\nonumber\\
&\ll q^2_1 q'^2_2\tau^2(q_1 q'_2)\ll q^2_1 q^2_2\tau^2(q_1 q_2)\,.
\end{align}
Now the lemma follows from \eqref{lambdaastq1q2lmnest1} -- \eqref{lambdaastq1q2lmnest3}.
\end{proof}

\begin{lemma}\label{Lambdaast123est}
Assume that $8\nmid q_1 q_2$ and $H_0\geq2$. Then for the sums
\begin{equation}\label{Lambdaast12}
\Lambda^\ast_1=\sum\limits_{1\leq l\leq H_0}\frac{|\lambda^\ast(q_1, q_2, l, 0, 0)|}{l}\,,\quad
\Lambda^\ast_2=\sum\limits_{1\leq n\leq H_0}\frac{|\lambda^\ast(q_1, q_2, 0, 0, n)|}{n}\,,
\end{equation}

\begin{equation}\label{Lambdaast34}
\Lambda^\ast_3=\sum\limits_{1\leq l, m\leq H_0}\frac{|\lambda^\ast(q_1, q_2, l, m, 0)|}{l m}\,,\quad
\Lambda^\ast_4=\sum\limits_{1\leq l, n\leq H_0}\frac{|\lambda^\ast(q_1, q_2, l, 0, n)|}{l n}
\end{equation}
and
\begin{equation}\label{Lambdaast5}
\Lambda^\ast_5=\sum\limits_{1\leq l, m, n\leq H_0}\frac{|\lambda^\ast(q_1, q_2, l, m, n)|}{l m n}
\end{equation}
the estimations
\begin{equation*}%\label{Lambda12est}
\Lambda^\ast_i\ll (q_1q_2)^{2+\varepsilon} H^\varepsilon_0\,,\quad i=1, 2, 3, 4, 5
\end{equation*}
hold.
\end{lemma}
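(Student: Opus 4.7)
My plan is to apply Lemma \ref{lambdaastupperbound} directly inside each of the five sums $\Lambda^\ast_i$. The decisive point is that the bound furnished by Lemma \ref{lambdaastupperbound}, namely $|\lambda^\ast(q_1, q_2, l, m, n)| \ll (q_1 q_2)^{2+\varepsilon}$, is uniform in $l, m, n$ -- in contrast to the bound of Lemma \ref{lambdaupperbound}, which carries the delicate factor $(q_1 q_2, l, m, n)$ that had to be handled in Lemma \ref{Lambda123est} by imitating the argument of Fan \cite{Fan}. This uniform character removes essentially all difficulty from the present lemma.

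First I would verify that the hypothesis $8 \nmid q_1 q_2$ of Lemma \ref{lambdaastupperbound} coincides with the hypothesis of the present statement, so the uniform bound is at our disposal. Next I would factor this bound outside each of $\Lambda^\ast_1, \ldots, \Lambda^\ast_5$, which reduces the problem to estimating harmonic sums of the shape
\begin{equation*}
\sum_{1 \leq l \leq H_0} \frac{1}{l}\,,\quad
\sum_{1 \leq l, m \leq H_0} \frac{1}{lm}\,,\quad
\sum_{1 \leq l, m, n \leq H_0} \frac{1}{lmn}\,,
\end{equation*}
each of which is $\ll (\log H_0)^3 \ll H_0^\varepsilon$. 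Combining the two estimates yields $\Lambda^\ast_i \ll (q_1 q_2)^{2+\varepsilon} H_0^\varepsilon$ for $i = 1, 2, 3, 4, 5$, which is exactly what the lemma asserts.

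I do not foresee any real obstacle: the whole difficulty has already been absorbed into Lemma \ref{lambdaastupperbound}. The sharper analysis used for Lemma \ref{Lambda123est}, which exploits the gcd factor in order to save a power of $q_1 q_2$, is unnecessary here precisely because the target bound is weaker by a factor of $q_1 q_2$, so the crude application of the uniform estimate already suffices. If anything, the only step where one must be mildly careful is to confirm that replacing every harmonic-type sum by a power of $\log H_0$ gives $\ll H_0^\varepsilon$ with an implied constant independent of $q_1, q_2$, which is immediate.
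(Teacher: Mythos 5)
Your proposal is correct and matches the paper's own proof exactly: one applies the uniform bound $\lambda^\ast(q_1,q_2,l,m,n)\ll (q_1q_2)^{2+\varepsilon}$ from Lemma \ref{lambdaastupperbound}, pulls it out of each sum, and bounds the remaining harmonic sums by powers of $\log H_0\ll H_0^{\varepsilon}$. Nothing further is needed.
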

\begin{proof}
Using \eqref{Lambdaast12} and Lemma \ref{lambdaastupperbound} we get
\begin{equation*}
\Lambda^\ast_1, \Lambda^\ast_2\ll (q_1q_2)^{2+\varepsilon}
\sum\limits_{1\leq l\leq H_0}\frac{1}{l}\ll (q_1q_2)^{2+\varepsilon}H^\varepsilon_0\,
\end{equation*}
Further \eqref{Lambdaast34} and Lemma \ref{lambdaastupperbound} yield
\begin{equation*}
\Lambda^\ast_3, \Lambda^\ast_4\ll (q_1q_2)^{2+\varepsilon}
\left(\sum\limits_{1\leq l\leq H_0}\frac{1}{l}\right)^2\ll (q_1q_2)^{2+\varepsilon}H^\varepsilon_0\,
\end{equation*}
Finally \eqref{Lambdaast5} and Lemma \ref{lambdaastupperbound} imply
\begin{equation*}
\Lambda^\ast_5\ll (q_1q_2)^{2+\varepsilon}
\left(\sum\limits_{1\leq l\leq H_0}\frac{1}{l}\right)^3\ll (q_1q_2)^{2+\varepsilon}H^\varepsilon_0\,.
\end{equation*}
\end{proof}

\section{Proof of Theorem 1}\label{ProofofTheorem1}
\indent

Using \eqref{GammaH} and the well-known identity
\begin{equation}\label{wellknownidentity}
\mu^2(n)=\sum\limits_{d^2|n}\mu(d)
\end{equation}
we write
\begin{equation}\label{GammaHdecomp}
\Gamma(H)=\sum\limits_{d_1, d_2\atop{(d_1, d_2)=1}}\mu(d_1)\mu(d_2)
\sum\limits_{1\leq x, y, z\leq H\atop{x^2+y^2+z^2+z+1\equiv 0\,(d_1^2)\atop{x^2+y^2+z^2+z+2\equiv 0\,(d_2^2)}}}1
=\Gamma_1(H)+\Gamma_2(H)\,,
\end{equation}
where
\begin{align}
\label{GammaH1}
&\Gamma_1(H)=\sum\limits_{d_1 d_2\leq \xi\atop{(d_1, d_2)=1}}\mu(d_1)\mu(d_2)\Sigma(H, d_1^2, d_2^2)\,,\\
\label{GammaH2}
&\Gamma_2(H)=\sum\limits_{d_1 d_2>\xi\atop{(d_1, d_2)=1}}\mu(d_1)\mu(d_2)\Sigma(H, d_1^2, d_2^2)\,,\\
\label{Sigma}
&\Sigma(H, d_1^2, d_2^2)=\sum\limits_{1\leq x, y, z\leq H\atop{x^2+y^2+z^2+z+1\equiv 0\,(d_1^2)\atop{x^2+y^2+z^2+z+2\equiv 0\,(d_2^2)}}}1\,,\\
\label{xiH}
&\sqrt{H}\leq \xi\leq H\,.
\end{align}

\subsection{Estimation of $\mathbf{\Gamma_1(H)}$}\label{EstimationofGammaH1}

In this subsection we follow the method in Fan and Zhai \cite{Fan}.
At the estimation of $\Gamma_1(H)$ we will suppose that $q_1=d_1^2$, $q_2=d_2^2$, where
$d_1$ and $d_2$ are square-free, $(q_1, q_2)=1$ and $d_1 d_2\leq \xi$.
Denote
\begin{equation}\label{Omegaq1q2}
\Omega(H, q_1, q_2, x)=\sum\limits_{1\leq h\leq H\atop{h\equiv x\,(q_1q_2)}}1\,.
\end{equation}
Now \eqref{well-known} and \eqref{Omegaq1q2} lead to
\begin{align}\label{Omegaq1q2est}
\Omega(H, q_1, q_2, x)&=\frac{1}{q_1q_2}\sum\limits_{1\leq h\leq H}
\sum\limits_{t=1}^{q_1q_2}e\left(\frac{(h-x)t}{q_1q_2}\right)\nonumber\\
&=\frac{1}{q_1q_2}\sum\limits_{t=1}^{q_1q_2}e\left(-\frac{xt}{q_1q_2}\right)
\sum\limits_{1\leq h\leq H}e\left(\frac{ht}{q_1q_2}\right)\nonumber\\
&=\frac{H}{q_1q_2}+\frac{1}{q_1q_2}\sum\limits_{t=1}^{q_1q_2-1}e\left(-\frac{xt}{q_1q_2}\right)
\sum\limits_{1\leq h\leq H}e\left(\frac{ht}{q_1q_2}\right)\,.
\end{align}
From \eqref{Sigma}, \eqref{Omegaq1q2} and \eqref{Omegaq1q2est} we obtain
\begin{align*}
\Sigma(H, q_1, q_2)
&=\sum\limits_{1\leq x, y, z\leq q_1q_2\atop{x^2+y^2+z^2+z+1\equiv 0\,(q_1)\atop{x^2+y^2+z^2+z+2\equiv 0\,(q_2)}}}
\Omega(H, q_1, q_2, x)\,\Omega(H, q_1, q_2, y)\,\Omega(H, q_1, q_2, z)\nonumber\\
\end{align*}
\begin{align}\label{Sigmaq1q2est1}
&=\frac{1}{(q_1q_2)^3}\sum\limits_{1\leq x, y, z\leq q_1q_2\atop{x^2+y^2+z^2+z+1\equiv 0\,(q_1)\atop{x^2+y^2+z^2+z+2\equiv 0\,(q_2)}}}
\Big(H^3+2H^2S_1+H^2S'_1+2HS_2+HS'_2+S_3\Big)\,,
\end{align}
where
\begin{align}
\label{S1q1q2}
&S_1:=S_1(x, q_1, q_2, H)=\sum\limits_{t=1}^{q_1q_2-1}e\left(-\frac{xt}{q_1q_2}\right)
\sum\limits_{1\leq h\leq H}e\left(\frac{ht}{q_1q_2}\right)\,,\\
\label{S'1q1q2}
&S'_1:=S_1(z, q_1, q_2, H)=\sum\limits_{t=1}^{q_1q_2-1}e\left(-\frac{zt}{q_1q_2}\right)
\sum\limits_{1\leq h\leq H}e\left(\frac{ht}{q_1q_2}\right)\,,\\
\label{S2q1q2}
&S_2:=S_2(x, z, q_1, q_2, H)=\sum\limits_{t_1=1}^{q_1q_2-1}\sum\limits_{t_2=1}^{q_1q_2-1}e\left(-\frac{xt_1+zt_2}{q_1q_2}\right)
\prod\limits_{i=1}^{2}\sum\limits_{1\leq h_i\leq H}e\left(\frac{h_it_i}{q_1q_2}\right)\,,\\
\label{S'2q1q2}
&S'_2:=S_2(x, y, q_1, q_2, H)=\sum\limits_{t_1=1}^{q_1q_2-1}\sum\limits_{t_2=1}^{q_1q_2-1}e\left(-\frac{xt_1+yt_2}{q_1q_2}\right)
\prod\limits_{i=1}^{2}\sum\limits_{1\leq h_i\leq H}e\left(\frac{h_it_i}{q_1q_2}\right)\,,\\
\label{S3q1q2}
&S_3:=S_3(x, y, z, q_1, q_2, H)=\sum\limits_{t_1=1}^{q_1q_2-1}\sum\limits_{t_2=1}^{q_1q_2-1}
\sum\limits_{t_3=1}^{q_1q_2-1}e\left(-\frac{xt_1+yt_2+zt_3}{q_1q_2}\right)
\prod\limits_{i=1}^{3}\sum\limits_{1\leq h_i\leq H}e\left(\frac{h_it_i}{q_1q_2}\right)\,.
\end{align}
Taking into account \eqref{lambdaq1q2}, \eqref{Sigmaq1q2est1} -- \eqref{S3q1q2} we derive
\begin{equation}\label{Sigmaq1q2est2}
\Sigma(H, q_1, q_2)=\frac{1}{(q_1q_2)^3}\Big(H^3\lambda(q_1, q_2)+2H^2\Sigma_1+H^2\Sigma'_1+2H\Sigma_2+H\Sigma'_2+\Sigma_3\Big)\,,
\end{equation}
where
\begin{align}
\label{Sigma1q1q2}
&\Sigma_1=\sum\limits_{1\leq x, y, z\leq q_1q_2\atop{x^2+y^2+z^2+z+1\equiv 0\,(q_1)\atop{x^2+y^2+z^2+z+2\equiv 0\,(q_2)}}}
\sum\limits_{t=1}^{q_1q_2-1}e\left(-\frac{xt}{q_1q_2}\right)\sum\limits_{1\leq h\leq H}e\left(\frac{ht}{q_1q_2}\right)\,,\\
\label{Sigma'1q1q2}
&\Sigma'_1=\sum\limits_{1\leq x, y, z\leq q_1q_2\atop{x^2+y^2+z^2+z+1\equiv 0\,(q_1)\atop{x^2+y^2+z^2+z+2\equiv 0\,(q_2)}}}
\sum\limits_{t=1}^{q_1q_2-1}e\left(-\frac{zt}{q_1q_2}\right)\sum\limits_{1\leq h\leq H}e\left(\frac{ht}{q_1q_2}\right)\,,\\
\label{Sigma2q1q2}
&\Sigma_2=\sum\limits_{1\leq x, y, z\leq q_1q_2\atop{x^2+y^2+z^2+z+1\equiv 0\,(q_1)\atop{x^2+y^2+z^2+z+2\equiv 0\,(q_2)}}}
\sum\limits_{t_1=1}^{q_1q_2-1}\sum\limits_{t_2=1}^{q_1q_2-1}e\left(-\frac{xt_1+zt_2}{q_1q_2}\right)
\prod\limits_{i=1}^{2}\sum\limits_{1\leq h_i\leq H}e\left(\frac{h_it_i}{q_1q_2}\right)\,,\\
\label{Sigma'2q1q2}
&\Sigma'_2=\sum\limits_{1\leq x, y, z\leq q_1q_2\atop{x^2+y^2+z^2+z+1\equiv 0\,(q_1)\atop{x^2+y^2+z^2+z+2\equiv 0\,(q_2)}}}
\sum\limits_{t_1=1}^{q_1q_2-1}\sum\limits_{t_2=1}^{q_1q_2-1}e\left(-\frac{xt_1+yt_2}{q_1q_2}\right)
\prod\limits_{i=1}^{2}\sum\limits_{1\leq h_i\leq H}e\left(\frac{h_it_i}{q_1q_2}\right)\,,
\end{align}

\begin{align}
\label{Sigma3q1q2}
&\Sigma_3=\sum\limits_{1\leq x, y, z\leq q_1q_2\atop{x^2+y^2+z^2+z+1\equiv 0\,(q_1)\atop{x^2+y^2+z^2+z+2\equiv 0\,(q_2)}}}
\sum\limits_{t_1=1}^{q_1q_2-1}\sum\limits_{t_2=1}^{q_1q_2-1}\sum\limits_{t_3=1}^{q_1q_2-1}e\left(-\frac{xt_1+yt_2+zt_3}{q_1q_2}\right)
\prod\limits_{i=1}^{3}\sum\limits_{1\leq h_i\leq H}e\left(\frac{h_it_i}{q_1q_2}\right)\,.
\end{align}
By \eqref{lambdaq1q2lmn}, \eqref{Sigma1q1q2}, Lemma \ref{minimum} and Lemma \ref{Lambda123est} it follows
\begin{align}\label{Sigma1q1q2est1}
\Sigma_1&=\sum\limits_{t=1}^{q_1q_2-1}\lambda(q_1, q_2, -t,0, 0)\sum\limits_{1\leq h\leq H}e\left(\frac{ht}{q_1q_2}\right)
\ll \sum\limits_{t=1}^{q_1q_2-1}|\lambda(q_1, q_2, -t,0, 0)|  \left\|\frac{t}{q_1q_2}\right\|^{-1}\nonumber\\
&\ll q_1q_2\sum\limits_{t=1}^{q_1q_2-1}\frac{|\lambda(q_1, q_2, -t,0, 0)| }{t}\ll (q_1q_2)^{2+\varepsilon}\,.
\end{align}
Arguing in the same way for the sums \eqref{Sigma'1q1q2} -- \eqref{Sigma3q1q2} we deduce
\begin{equation}\label{Sigma23q1q2est1}
\Sigma'_1\ll (q_1q_2)^{2+\varepsilon}\,, \quad \Sigma_2\ll (q_1q_2)^{3+\varepsilon}\,, \quad
\Sigma'_2\ll (q_1q_2)^{3+\varepsilon}\,, \quad \Sigma_3 \ll (q_1q_2)^{4+\varepsilon}\,.
\end{equation}
Now \eqref{Sigmaq1q2est2}, \eqref{Sigma1q1q2est1} and \eqref{Sigma23q1q2est1} give us
\begin{equation}\label{Sigmaest3}
\Sigma(H, q_1, q_2)=\frac{H^3}{(q_1q_2)^3}\lambda(q_1, q_2)+\mathcal{O}\Big(H^2(q_1q_2)^{\varepsilon-1}
+ H(q_1q_2)^\varepsilon+(q_1q_2)^{1+\varepsilon}\Big)\,.
\end{equation}
Bearing in mind \eqref{GammaH1}, \eqref{xiH} and \eqref{Sigmaest3} we get
\begin{align}\label{GammaH1est1}
\Gamma_1(H)&=H^3  \sum\limits_{d_1 d_2\leq \xi\atop{(d_1, d_2)=1}}\frac{\mu(d_1)\mu(d_2)\lambda(d^2_1, d^2_2)}{d^6_1 d^6_2}\nonumber\\
&+\mathcal{O}\Bigg(\sum\limits_{d_1 d_2\leq \xi}\Big(H^2(d_1d_2)^{\varepsilon-2}
+H(d_1d_2)^\varepsilon+(d_1d_2)^{2+\varepsilon}\Big)\Bigg)\nonumber\\
&=\sigma H^3-H^3 \sum\limits_{d_1 d_2>\xi}\frac{\mu(d_1)\mu(d_2)\lambda(d^2_1, d^2_2)}{d^6_1 d^6_2}
+\mathcal{O}\Big(H^2+ \xi^{3+\varepsilon}\Big)\,,
\end{align}
where
\begin{equation}\label{sigmasum}
\sigma=\sum\limits_{d_1, d_2=1\atop{(d_1, d_2)=1}}^\infty\frac{\mu(d_1)\mu(d_2)\lambda(d^2_1, d^2_2)}{d^6_1 d^6_2}\,.
\end{equation}
From Lemma \ref{lambdaupperbound} we find
\begin{equation}\label{d1d2>est}
\sum\limits_{d_1d_2>\xi}\frac{\mu(d_1)\mu(d_2)\lambda(d^2_1, d^2_2)}{d^6_1 d^6_2}
\ll\sum\limits_{d_1d_2>\xi}\frac{(d_1d_2)^{4+\varepsilon}}{(d_1d_2)^6}
\ll\sum\limits_{n>\xi}\frac{\tau(n)}{n^{2-\varepsilon}}\ll \xi^{\varepsilon-1}\,.
\end{equation}
Using \eqref{lambdaq1q2}, \eqref{sigmasum}, Lemma \ref{multiplicative1}, the Euler product and
arguing as in \cite{Dimitrov1} we obtain
\begin{equation}\label{productp6}
\sigma=\prod\limits_{p}\left(1-\frac{\lambda(p^2, 1)+\lambda(1, p^2)}{p^6}\right)\,.
\end{equation}
Now \eqref{GammaH1est1}, \eqref{d1d2>est} and \eqref{productp6} yield
\begin{equation}\label{GammaH1est2}
\Gamma_1(H)=\prod\limits_{p}\left(1-\frac{\lambda(p^2, 1)+\lambda(1, p^2)}{p^6}\right)H^3
+\mathcal{O}\Big(H^3\xi^{\varepsilon-1}+H^2+ \xi^{3+\varepsilon}\Big)\,.
\end{equation}

\subsection{Estimation of $\mathbf{\Gamma_2(H)}$}\label{SubsectionGammaH2est}

Using \eqref{GammaH2} we write
\begin{equation}\label{GammaH2est1}
\Gamma_2(H)\ll(\log H)^2\sum\limits_{D_1\leq d_1<2D_1}\sum\limits_{D_2\leq d_2<2D_2}
\sum\limits_{k\leq(3H^2+H+1)d_1^{-2}\atop{kd_1^2+1\equiv0\,(d_2^2)}}
\sum\limits_{1\leq x, y, z\leq H\atop{x^2+y^2+z^2+z=kd_1^2-1}}1\,,
\end{equation}
where
\begin{equation}\label{DT}
\frac{1}{2}\leq D_1, D_2\leq\sqrt{3H^2+H+2}\,,\quad D_1D_2\geq\frac{\xi}{4}\,.
\end{equation}
On the one hand \eqref{GammaH2est1} gives us
\begin{align}\label{GammaH2est2}
\Gamma_2(H)&\ll H^\varepsilon\sum\limits_{D_1\leq d_1<2D_1}
\sum\limits_{k\leq(3H^2+H+1)D_1^{-2}}\sum\limits_{D_2\leq d_2<2D_2}
\sum\limits_{l\leq(3H^2+H+2)D_2^{-2}\atop{kd_1^2+1=ld_2^2}}1\nonumber\\
&\ll  H^\varepsilon\sum\limits_{D_1\leq d_1<2D_1}
\sum\limits_{k\leq(3H^2+H+1)D_1^{-2}}\tau(kd_1^2+1)\nonumber\\
&\ll H^\varepsilon\sum\limits_{D_1\leq d_1<2D_1}
\sum\limits_{k\leq(3H^2+H+1)D_1^{-2}}1\nonumber\\
&\ll H^{2+\varepsilon}D_1^{-1}\,.
\end{align}
On the other hand \eqref{GammaH2est1} implies
\begin{align}\label{GammaH2est3}
\Gamma_2(H)&\ll H^\varepsilon\sum\limits_{D_2\leq d_2<2D_2}
\sum\limits_{l\leq(3H^2+H+2)D_2^{-2}}\sum\limits_{D_1\leq d_1<2D_1}
\sum\limits_{k\leq(3H^2+H+1)D_1^{-2}\atop{kd_1^2=ld_2^2-1}}1\nonumber\\
&\ll H^\varepsilon\sum\limits_{D_2\leq d_2<2D_2}
\sum\limits_{l\leq(3H^2+H+2)D_2^{-2}}\tau(ld_2^2-1)\nonumber\\
&\ll H^\varepsilon\sum\limits_{D_2\leq d_2<2D_2}
\sum\limits_{l\leq(3H^2+H+2)D_2^{-2}}1\nonumber\\
&\ll H^{2+\varepsilon}D_2^{-1}\,.
\end{align}
By \eqref{DT} -- \eqref{GammaH2est3} it follows
\begin{equation}\label{GammaH2est4}
\Gamma_2(H)\ll  H^{2+\varepsilon}\xi^{-\frac{1}{2}}\,.
\end{equation}

\subsection{The end of the proof of Theorem 1}

Summarizing \eqref{GammaHdecomp}, \eqref{GammaH1est2}, \eqref{GammaH2est4}
and choosing $\xi=H^\frac{3}{4}$ we establish the asymptotic formula \eqref{asymptoticformula1}.
This completes the proof of Theorem \ref{Theorem1}.

\section{Proof of Theorem 2}\label{ProofofTheorem2}
\indent

By \eqref{GammaHast} and \eqref{wellknownidentity} we deduce
\begin{equation}\label{GammaHastdecomp}
\Gamma^\ast(H)=\sum\limits_{d_1, d_2\atop{(d_1, d_2)=1}}\mu(d_1)\mu(d_2)
\sum\limits_{1\leq x, y, z\leq H\atop{x^2+y^2+z+1\equiv 0\,(d_1^2)\atop{x^2+y^2+z+2\equiv 0\,(d_2^2)}}}1
=\Gamma^\ast_1(H)+\Gamma^\ast_2(H)\,,
\end{equation}
where
\begin{align}
\label{GammaHast1}
&\Gamma^\ast_1(H)=\sum\limits_{d_1 d_2\leq \xi^\ast\atop{(d_1, d_2)=1}}\mu(d_1)\mu(d_2)\Sigma(H, d_1^2, d_2^2)\,,\\
\label{GammaHast2}
&\Gamma^\ast_2(H)=\sum\limits_{d_1 d_2>\xi^\ast\atop{(d_1, d_2)=1}}\mu(d_1)\mu(d_2)\Sigma(H, d_1^2, d_2^2)\,,\\
\label{Sigmaast}
&\Sigma^\ast(H, d_1^2, d_2^2)=\sum\limits_{1\leq x, y, z\leq H\atop{x^2+y^2+z+1\equiv 0\,(d_1^2)\atop{x^2+y^2+z+2\equiv 0\,(d_2^2)}}}1\,,\\
\label{xiHast}
&\sqrt{H}\leq \xi^\ast\leq H\,.
\end{align}

\subsection{Estimation of $\mathbf{\Gamma^\ast_1(H)}$}

Using \eqref{lambdaq1q2ast}, \eqref{Sigmaast} and proceeding as in Subsection \ref{EstimationofGammaH1} we get
\begin{equation}\label{Sigmaastest1}
\Sigma^\ast(H, q_1, q_2)=\frac{1}{(q_1q_2)^3}\Big(H^3\lambda^\ast(q_1, q_2)+2H^2\Sigma^\ast_1
+H^2\widehat{\Sigma}^\ast_1+2H\Sigma^\ast_2+H\widehat{\Sigma}^\ast_2+\Sigma^\ast_3\Big)\,,
\end{equation}
where
\begin{align}
\label{Sigmaast1q1q2}
&\Sigma^\ast_1=\sum\limits_{1\leq x, y, z\leq q_1q_2\atop{x^2+y^2+z+1\equiv 0\,(q_1)\atop{x^2+y^2+z+2\equiv 0\,(q_2)}}}
\sum\limits_{t=1}^{q_1q_2-1}e\left(-\frac{xt}{q_1q_2}\right)\sum\limits_{1\leq h\leq H}e\left(\frac{ht}{q_1q_2}\right)\,,\\
\label{Sigma'ast1q1q2}
&\widehat{\Sigma}^\ast_1=\sum\limits_{1\leq x, y, z\leq q_1q_2\atop{x^2+y^2+z+1\equiv 0\,(q_1)\atop{x^2+y^2+z+2\equiv 0\,(q_2)}}}
\sum\limits_{t=1}^{q_1q_2-1}e\left(-\frac{zt}{q_1q_2}\right)\sum\limits_{1\leq h\leq H}e\left(\frac{ht}{q_1q_2}\right)\,,
\end{align}

\begin{align}
\label{Sigmaast2q1q2}
&\Sigma^\ast_2=\sum\limits_{1\leq x, y, z\leq q_1q_2\atop{x^2+y^2+z+1\equiv 0\,(q_1)\atop{x^2+y^2+z+2\equiv 0\,(q_2)}}}
\sum\limits_{t_1=1}^{q_1q_2-1}\sum\limits_{t_2=1}^{q_1q_2-1}e\left(-\frac{xt_1+zt_2}{q_1q_2}\right)
\prod\limits_{i=1}^{2}\sum\limits_{1\leq h_i\leq H}e\left(\frac{h_it_i}{q_1q_2}\right)\,,\\
\label{Sigma'ast2q1q2}
&\widehat{\Sigma}^\ast_2=\sum\limits_{1\leq x, y, z\leq q_1q_2\atop{x^2+y^2+z+1\equiv 0\,(q_1)\atop{x^2+y^2+z+2\equiv 0\,(q_2)}}}
\sum\limits_{t_1=1}^{q_1q_2-1}\sum\limits_{t_2=1}^{q_1q_2-1}e\left(-\frac{xt_1+yt_2}{q_1q_2}\right)
\prod\limits_{i=1}^{2}\sum\limits_{1\leq h_i\leq H}e\left(\frac{h_it_i}{q_1q_2}\right)\,,\\
\label{Sigmaast3q1q2}
&\Sigma^\ast_3=\sum\limits_{1\leq x, y, z\leq q_1q_2\atop{x^2+y^2+z+1\equiv 0\,(q_1)\atop{x^2+y^2+z+2\equiv 0\,(q_2)}}}
\sum\limits_{t_1=1}^{q_1q_2-1}\sum\limits_{t_2=1}^{q_1q_2-1}\sum\limits_{t_3=1}^{q_1q_2-1}e\left(-\frac{xt_1+yt_2+zt_3}{q_1q_2}\right)
\prod\limits_{i=1}^{3}\sum\limits_{1\leq h_i\leq H}e\left(\frac{h_it_i}{q_1q_2}\right)\,.
\end{align}
Now \eqref{lambdaq1q2lmnast}, \eqref{Sigmaast1q1q2}, Lemma \ref{minimum} and Lemma \ref{Lambdaast123est} lead to
\begin{align}\label{Sigmaast1est}
\Sigma^\ast_1&=\sum\limits_{t=1}^{q_1q_2-1}\lambda^\ast(q_1, q_2, -t,0, 0)\sum\limits_{1\leq h\leq H}e\left(\frac{ht}{q_1q_2}\right)
\ll \sum\limits_{t=1}^{q_1q_2-1}|\lambda^\ast(q_1, q_2, -t,0, 0)|  \left\|\frac{t}{q_1q_2}\right\|^{-1}\nonumber\\
&\ll q_1q_2\sum\limits_{t=1}^{q_1q_2-1}\frac{|\lambda^\ast(q_1, q_2, -t,0, 0)| }{t}\ll (q_1q_2)^{3+\varepsilon}\,.
\end{align}
Working in the same way for the sums \eqref{Sigma'ast1q1q2} --\eqref{Sigmaast3q1q2} we find
\begin{equation}\label{Sigmaast23est}
\widehat{\Sigma}^\ast_1\ll (q_1q_2)^{3+\varepsilon}\,, \quad \Sigma^\ast_2\ll (q_1q_2)^{4+\varepsilon}\,, \quad
\widehat{\Sigma}^\ast_2\ll (q_1q_2)^{4+\varepsilon}\,, \quad \Sigma^\ast_3 \ll (q_1q_2)^{5+\varepsilon}\,.
\end{equation}
Further \eqref{Sigmaastest1}, \eqref{Sigmaast1est} and \eqref{Sigmaast23est} give us
\begin{equation}\label{Sigmaastest2}
\Sigma^\ast(H, q_1, q_2)=\frac{H^3}{(q_1q_2)^3}\lambda^\ast(q_1, q_2)+\mathcal{O}\Big(H^2(q_1q_2)^\varepsilon
+ H(q_1q_2)^{1+\varepsilon}+(q_1q_2)^{2+\varepsilon}\Big)\,.
\end{equation}
Taking into account \eqref{GammaHast1}, \eqref{xiHast} and \eqref{Sigmaastest2} we  deduce
\begin{align}\label{GammaHast1est1}
\Gamma^\ast_1(H)&=H^3\sum\limits_{d_1 d_2\leq \xi^\ast \atop{(d_1, d_2)=1}}\frac{\mu(d_1)\mu(d_2)
\lambda^\ast(d^2_1, d^2_2)}{d^6_1 d^6_2}\nonumber\\
&+\mathcal{O}\Bigg(\sum\limits_{d_1 d_2\leq \xi^\ast}\Big(H^2(d_1d_2)^\varepsilon
+H(d_1d_2)^{2+\varepsilon}+(d_1d_2)^{4+\varepsilon}\Big)\Bigg)\nonumber\\
&=\sigma^\ast H^3-H^3 \sum\limits_{d_1 d_2>\xi^\ast}\frac{\mu(d_1)\mu(d_2)\lambda^\ast(d^2_1, d^2_2)}{d^6_1 d^6_2}
+\mathcal{O}\Big((\xi^\ast)^{5+\varepsilon}\Big)\,,
\end{align}
where
\begin{equation}\label{sigmasumast}
\sigma^\ast=\sum\limits_{d_1, d_2=1\atop{(d_1, d_2)=1}}^\infty\frac{\mu(d_1)\mu(d_2)\lambda^\ast(d^2_1, d^2_2)}{d^6_1 d^6_2}\,.
\end{equation}
From Lemma \ref{lambdaastupperbound} we derive
\begin{equation}\label{d1d2ast>est}
\sum\limits_{d_1d_2>\xi^\ast}\frac{\mu(d_1)\mu(d_2)\lambda^\ast(d^2_1, d^2_2)}{d^6_1 d^6_2}
\ll\sum\limits_{d_1d_2>\xi^\ast}\frac{(d_1d_2)^{4+\varepsilon}}{(d_1d_2)^6}
\ll\sum\limits_{n>\xi^\ast}\frac{\tau(n)}{n^{2-\varepsilon}}
\ll(\xi^\ast)^{\varepsilon-1}\,.
\end{equation}
Using \eqref{lambdaq1q2ast}, \eqref{sigmasumast}, Lemma \ref{multiplicative2}, the Euler product and
arguing as in \cite{Dimitrov1} we find
\begin{equation}\label{productp6ast}
\sigma^\ast=\prod\limits_{p}\left(1-\frac{\lambda^\ast(p^2, 1)+\lambda^\ast(1, p^2)}{p^6}\right)\,.
\end{equation}
Now \eqref{xiHast}, \eqref{GammaHast1est1}, \eqref{d1d2ast>est} and \eqref{productp6ast} yield
\begin{equation}\label{GammaHast1est2}
\Gamma^\ast_1(H)=\prod\limits_{p}\left(1-\frac{\lambda^\ast(p^2, 1)+\lambda^\ast(1, p^2)}{p^6}\right)H^3
+\mathcal{O}\Big((\xi^\ast)^{5+\varepsilon}\Big)\,.
\end{equation}

\subsection{Estimation of $\mathbf{\Gamma^\ast_2(H)}$}

Working similar to Subsection \ref{SubsectionGammaH2est} for the sum \eqref{GammaHast2} we obtain
\begin{equation}\label{GammaHast2est1}
\Gamma^\ast_2(H)\ll H^{2+\varepsilon}(\xi^\ast)^{-\frac{1}{2}}\,.
\end{equation}

\subsection{The end of the proof of Theorem 2}

Bearing in mind \eqref{GammaHastdecomp}, \eqref{GammaHast1est2}, \eqref{GammaHast2est1}
and choosing $\xi^\ast=H^\frac{1}{2}$ we establish the asymptotic formula \eqref{asymptoticformula2}.
This completes the proof of Theorem \ref{Theorem2}.

\vskip20pt
\footnotesize
\begin{flushleft}
S. I. Dimitrov\\
\quad\\
Faculty of Applied Mathematics and Informatics\\
Technical University of Sofia \\
Blvd. St.Kliment Ohridski 8 \\
Sofia 1756, Bulgaria\\
e-mail: sdimitrov@tu-sofia.bg\\
\end{flushleft}

\begin{flushleft}
Department of Bioinformatics and Mathematical Modelling\\
Institute of Biophysics and Biomedical Engineering\\
Bulgarian Academy of Sciences\\
Acad. G. Bonchev Str. Bl. 105, Sofia-1113, Bulgaria \\
e-mail: xyzstoyan@gmail.com\\
\end{flushleft}

\end{document}